\DeclareRobustCommand{\greektext}{%
  \fontencoding{LGR}\selectfont\def\encodingdefault{LGR}}
\DeclareRobustCommand{\textgreek}[1]{\leavevmode{\greektext #1}}
\numberwithin{equation}{section}
\theoremstyle{plain}
\newtheorem{thm}{\protect\theoremname}[section]
\theoremstyle{plain}
\newtheorem{lem}[thm]{\protect\lemmaname}
\theoremstyle{plain}
\newtheorem{cor}[thm]{\protect\corollaryname}
\theoremstyle{remark}
\newtheorem{rem}[thm]{\protect\remarkname}
\theoremstyle{definition}
\newtheorem{example}[thm]{\protect\examplename}
\date{}
\providecommand{\corollaryname}{Corollary}
\providecommand{\examplename}{Example}
\providecommand{\lemmaname}{Lemma}
\providecommand{\remarkname}{Remark}
\providecommand{\theoremname}{Theorem}
\begin{document}
\title{A representation for the Expected Signature of Brownian motion up
to the first exit time of the planar unit disc }
\author{Horatio Boedihardjo, Lin He and Lisa Wang \thanks{\textcolor{black}{All authors gratefully acknowledges the support
from University of Warwick's URSS (Undergraduate Research Support
Scheme).} Please direct all enquiries to the undergraduate research
supervisor, Horatio Boedihardjo, via horatio.boedihardjo@warwick.ac.uk.\textcolor{black}{{} }}}
\maketitle
\begin{abstract}
The signature of a sample path is a formal series of iterated integrals
along the path. The expected signature of a stochastic process gives
a summary of the process that is especially useful for studying stochastic
differential equations driven by the process. Lyons-Ni derived a partial
differential equation for the expected signature of Brownian motion,
starting at a point $z$ in a bounded domain, until it hits to boundary
of the domain. We focus on the domain of planar unit disc centred
at 0. Motivated by recently found explicit formulae for some terms
in the expected signature of this process in terms of Bessel functions,
we derive a tensor series representation for this expected signature,
coming from from studying Lyons-Ni's PDE. Although the representation
is rather involved, it simplifies significantly to give a formula
for the polynomial leading order term in each tensor component of
the expected signature.
\end{abstract}

\section{Introduction }

Given a bounded variation path $x:[0,T]\rightarrow\mathbb{R}^{d}$,
the signature of $x$ is defined as the tensor series 
\begin{equation}
S(x)_{0,T}=(1,\int_{0}^{T}\mathrm{d}x_{t_{1}},\cdots,\int_{0<t_{1}<\cdots<t_{n}<T}\mathrm{d}x_{t_{1}}\otimes\cdots\otimes\mathrm{d}x_{t_{n}},\cdots).\label{eq:Signature}
\end{equation}
The idea of signature can be extended so that it is defined on ``rough
paths'' \cite{Lyo98}, which may not have bounded variation.In the
case when $x$ is a multi-dimensional Brownian motion, the Stratonovich
signature of $x$ is defined by (\ref{eq:Signature}) with the integration
being defined in the Stratonovich sense. 

The signature appears naturally when applying Picard's iteration to
solve differential equation of the form 
\[
\mathrm{d}Y_{t}=A(Y_{t})\mathrm{d}x_{t},
\]
where $A$ is a linear function. In fact, any solution $Y_{T}$ can
be expressed as a composition of a linear map $F_{A}$, which depends
on $A$, of $S(x)_{0,T}$ and $Y_{0}$. Therefore, $S(x)_{0,T}$ fully
captures the effect of $x$ on $Y_{T}$. As a result, the signature
of $x$ is considered, in the study of path-driven differential equations,
to be a useful summary of the path $x$, in a similar way as exponential
function is useful in the study of ordinary differential equations.
Signatures uniquely determine paths up to ``tree-like equivalence''
(\cite{Uniqueness1,Uniqueness2,YamThesis}) and various schemes have
been developed to reconstruct a path from its signature (\cite{Inversion1,Inversion2,Inversion3,Inversion4,Inversion5,Inversion6,Inversion7,Inversion8}). 

Likewise, when $x$ is a stochastic process, the probability distribution
of $S(x)_{0,T}$ would be a useful summary of the distribution of
$x$ as far as stochastic differential equations is concerned. The
algebraic properties of signature make it an even more attractive
tool. For example, Chevyrev-Lyons \cite{ChevyrevLyons} showed that
the expected signature of a stochastic process $x$ uniquely determines
the probability distribution of the signature of $x$, assuming certain
decay condition of the expected signature of $x$. This relies heavily
on the algebraic structure of the signature, because in general the
expected value of a random variable does not determine the distribution
of the random variable. Further recent applications of signature can
be found in references (\cite{Application1,Application2,Application3,Application4}).

The expected signature of the following processes have been computed,
listed in broadly chronological order: 

1. Brownian motion up to a deterministic time: Fawcett \cite{Fawcett}
and, using a different method, in Lyons-Victoir \cite{LyonsVictoir};

2. Fractional Brownian motions and other Gaussian processes in \cite{BaudoinCoutin}
for Hurst parameter $H>\frac{1}{2}$ and in \cite{FerrucciCass} for
$H>\frac{1}{4}$. See also the work of \cite{Passeggeri20} on weak
convergence rate;

3. SLE curves \cite{Werness12} (first three terms), \cite{UniquenessSimple}
(fourth term);

4. PDE representation for Brownian motion up to the exit time of a
bounded domain \cite{LyonsNi};

5. PDE representation for Diffusions \cite{NiHaoThesis,LiNiZhu};

6. Lévy processes in \cite{FrizShekhar}. 

This paper follows on naturally from item 4 \cite{LyonsNi}, which
derived a PDE for the expected signature for Brownian motion up to
the exit time of a domain. The paper \cite{LyonsNi} also derived
the first four terms of the expected signature was computed and produces
a recursive relation expressing the higher degree terms in terms of
lower degree terms. As far as we know, there has been no explicit
formula to solve the recurrence relation. In a more recent paper \cite{ExitROC},
an explicit formula for a \emph{projection} of the expected signature
has been found when the domain is a two dimensional unit disc, and
a multidimensional version can be found in \cite{LiNi}. As the projection
involves a sum of infinitely many terms, these works have given hope
that perhaps a reasonably tractable formula for the expected signature
of Brownian motion up to the exit time of a planar unit disc may exist.
The purpose of this work is to explore whether such formula indeed
exists, or equivalently, whether the PDE derived by Lyons and Ni \cite{LyonsNi}
can be solved in some way. Failing that, it would be interesting to
know which terms in the expected signature of Brownian motion up to
exit time of planar unit disc can be computed in a tidy manner using
the ideas from \cite{ExitROC,LiNi}. 

The main result of this paper is a tensor series representation of
the expected signature of Brownian motion up to the first exit time
of the planar unit disc (see Theorem \ref{thm:MainTheorem}). The
formula is unfortunately rather complicated, but the formula does
imply a nice expression for the leading order term in the expected
signature for each tensor degree (see Corollary \ref{cor:MainCorollary}). 

The plan for the paper is as follows. In Section 2, we will introduce
some notations. In Section 3, we will give the process of deriving
an expression for the solution of Lyons-Ni's PDE. 

\section{Notation}

Let $T((\mathbb{R}^{2}))$ be set of formal series of tensors over
$\mathbb{R}^{2}$, or more precisely, $T((\mathbb{R}^{2}))$ is the
set of all sequences 
\[
T((\mathbb{R}^{2}))=\{(a_{0},a_{1},a_{2},\ldots):a_{i}\in(\mathbb{R}^{2})^{\otimes i}\}.
\]
The map $\rho_{i}:T((\mathbb{R}^{2}))\rightarrow(\mathbb{R}^{2})^{\otimes i}$
is the projection of $T((\mathbb{R}^{2}))$ onto $(\mathbb{R}^{2})^{\otimes i}$,
sending each sequence $(a_{0},a_{1},\ldots)$ to the $i$-th entry
$a_{i}$. We let $\mathbf{1}$ denote multiplicative identity element
$(1,0,0,\cdots)$.

Let $\mathbb{D}$ be the two dimensional unit disc $\mathbb{D}=\{(x_{1},y)^{T}\in\mathbb{R}^{2}:x^{2}+y^{2}<1\}$.
Let $B^{z}$ be a Brownian motion starting from $z\in\mathbb{D}$
and let $\tau_{\mathbb{D}}$ be the first time $B^{z}$ exit $\mathbb{D}$. 

The signature of $B^{z}$ up to $\tau_{\mathbb{D}}$ is an element
of $T((\mathbb{R}^{2}))$ defined by 
\[
S(B^{z})_{0,\tau_{\mathbb{D}}}=(1,\int_{0}^{\tau_{\mathbb{D}}}\mathrm{d}B_{t_{1}}^{z},\ldots,\int_{0<t_{1}<\cdots<t_{n}<\tau_{\mathbb{D}}}\circ\mathrm{d}B_{t_{1}}^{z}\otimes\cdots\otimes\circ\mathrm{d}B_{t_{n}}^{z},\ldots),
\]
with the integration being defined in a Stratonovich sense. The expected
signature of $B^{z}$ up to the first exit time of $\mathbb{D}$ is
an element $\Phi_{\mathbb{D}}(z)$ of $T((\mathbb{R}^{2}))$ whose
$n$-th term is 
\[
\rho_{n}\big[\Phi_{\mathbb{D}}(z)\big]=\mathbb{E}\left[\int_{0<t_{1}<\cdots<t_{n}<\tau_{\mathbb{D}}}\circ\mathrm{d}B_{t_{1}}^{z}\otimes\cdots\otimes\circ\mathrm{d}B_{t_{n}}^{z}\right]
\]
and is first studied in \cite{LyonsNi}. Let $\{e_{1},e_{2}\}$ be
the standard basis of $\mathbb{R}^{2}$, that is 
\[
e_{1}=\left(\begin{array}{c}
1\\
0
\end{array}\right),e_{2}=\left(\begin{array}{c}
0\\
1
\end{array}\right).
\]
As a special case of the PDEs derived by Lyons-Ni \cite{LyonsNi},
we have that 

\begin{align}
\Delta\left(\Phi_{\mathbb{D}}(z)\right) & =-\left(\sum_{i=1}^{d}e_{i}\otimes e_{i}\right)\otimes\Phi_{\mathbb{D}}(z)-2\sum_{i=1}^{d}e_{i}\otimes\frac{\partial\Phi_{\mathbb{D}}(z)}{\partial z_{i}}, & \forall z\in\mathbb{D}\label{eq:LyonsNiPDE}\\
\lim_{t\uparrow\tau_{\mathbb{D}}}\Phi_{\mathbb{D}}(B_{t}) & =\mathbf{1}\;\mathrm{a.s.}\mathbb{P}^{z} & \forall z\in\mathbb{D}\label{eq:Boundary}\\
\rho_{0}\big[\Phi_{\mathbb{D}}(z)\big] & =1,\rho_{1}\big[\Phi_{\mathbb{D}}(z)\big]=0 & \forall z\in\overline{\mathbb{D}}.\nonumber 
\end{align}
Recall that $\Phi_{\mathbb{D}}(z)$ is the expected signature of Brownian
motion starting point $z$ and up to the exit time of the planar unit
disc $\mathbb{D}\subseteq\mathbb{R}^{2}$. It has been established
in Theorem 3.5 in \cite{LyonsNi} that the $\rho_{n}\big(\Phi_{\mathbb{D}}(z)\big)$
is a polynomial in $z$ for each $n$ and hence continuous on $\overline{\mathbb{D}}$
and therefore the condition (\ref{eq:Boundary}) simplifies to 
\[
\Phi_{\mathbb{D}}(z)=\mathbf{1}\;\forall z\in\partial\mathbb{D},
\]
where $\partial\mathbb{D}$ is the boundary of the disc, $\partial\mathbb{D}=\{z=(x,y)^{T}:x^{2}+y^{2}=1\}$.
The goal of this work is to explore to what extent can the boundary
value problem (\ref{eq:LyonsNiPDE}) and (\ref{eq:Boundary}) be ``solved''.

Given a linear map $A:\mathbb{R}^{2}\rightarrow\mathbb{R}^{2}$, we
are interested in the extension of $A$ to a linear map $\mathbf{A}:T((\mathbb{R}^{2}))\rightarrow T((\mathbb{R}^{2}))$,
due to the following lemma:
\begin{lem}
\label{lem:SignatureLinearMap}Let $A:\mathbb{R}^{2}\rightarrow\mathbb{R}^{2}$
be a linear map. Let $\mathbf{A}:T((\mathbb{R}^{2}))\rightarrow T((\mathbb{R}^{2}))$
be the unique linear map such that $\mathbf{A}[\mathbf{1}]=\mathbf{1}$
and for all $u_{1},\ldots,u_{n}\in\mathbb{R}^{2}$,
\[
\mathbf{A}[u_{1}\otimes\cdots\otimes u_{n}]=A[u_{1}]\otimes\cdots\otimes A[u_{n}].
\]
Then 
\[
\mathbb{E}[S(A[B])_{0,\tau_{\mathbb{D}}}]=\mathbf{A}[\Phi_{\mathbb{D}}(u)].
\]
\end{lem}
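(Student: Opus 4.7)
The plan is to first establish the pathwise identity
\[
S(A[B])_{0,\tau_{\mathbb{D}}} = \mathbf{A}\bigl[S(B)_{0,\tau_{\mathbb{D}}}\bigr] \quad \text{a.s.}
\]
and then take expectations termwise, using that $\Phi_{\mathbb{D}}(u) = \mathbb{E}[S(B^{u})_{0,\tau_{\mathbb{D}}}]$. Note that $\tau_{\mathbb{D}}$ here is the exit time of $B$ itself, not of $A[B]$, so the integration interval is the same on both sides; only the integrand is transformed.

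I would work one tensor level at a time. Fix $n \geq 1$ and let $A^{\otimes n} := \rho_{n}\circ \mathbf{A}|_{(\mathbb{R}^{2})^{\otimes n}}$, which by the defining property of $\mathbf{A}$ is the usual $n$-fold tensor power of $A$, a bounded linear operator on the finite dimensional space $(\mathbb{R}^{2})^{\otimes n}$. By linearity of the Stratonovich differential we have $\circ\, \mathrm{d}(A[B])_{t} = A\bigl[\circ\, \mathrm{d} B_{t}\bigr]$. Expanding in the basis $\{e_{i_{1}}\otimes\cdots\otimes e_{i_{n}}\}$ and using the multilinearity of the tensor product together with linearity of the iterated Stratonovich integral in each slot, one can pull the scalar coefficients $A_{i_{1}j_{1}}\cdots A_{i_{n}j_{n}}$ out of the integral one index at a time. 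This gives
\[
\int_{0<t_{1}<\cdots<t_{n}<\tau_{\mathbb{D}}}\!\!\! \circ\, \mathrm{d}(A[B])_{t_{1}}\!\otimes\cdots\otimes \circ\, \mathrm{d}(A[B])_{t_{n}}
\;=\; A^{\otimes n}\!\!\left[\int_{0<t_{1}<\cdots<t_{n}<\tau_{\mathbb{D}}}\!\!\!\circ\, \mathrm{d} B_{t_{1}}\!\otimes\cdots\otimes \circ\, \mathrm{d} B_{t_{n}}\right],
\]
which, after applying $\rho_n$, is the claimed pathwise identity at degree $n$.

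To conclude, take expectations of both sides. By Theorem~3.5 of \cite{LyonsNi}, the component $\rho_{n}[\Phi_{\mathbb{D}}(u)]$ is a polynomial in $u$ and in particular the underlying iterated integral is integrable. Since $A^{\otimes n}$ is a deterministic bounded linear map on a finite dimensional space, $\mathbb{E}\bigl[A^{\otimes n}[X]\bigr] = A^{\otimes n}\bigl[\mathbb{E}[X]\bigr]$, and so the pathwise identity transfers to expectations, yielding $\rho_{n}\bigl[\mathbb{E}[S(A[B])_{0,\tau_{\mathbb{D}}}]\bigr] = A^{\otimes n}\bigl[\rho_{n}[\Phi_{\mathbb{D}}(u)]\bigr] = \rho_{n}\bigl[\mathbf{A}[\Phi_{\mathbb{D}}(u)]\bigr]$ for every $n$.

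The only real obstacle is the pathwise step: one must carefully justify that iterated Stratonovich integration commutes with the deterministic linear map $A^{\otimes n}$. This is essentially bookkeeping in coordinates, but it is the place where one needs to be genuinely attentive, as the Stratonovich integral is only defined up to null sets and the identity must be verified for every multi-index $(i_{1},\ldots,i_{n})$ before reassembling into tensor form. Everything else is an application of linearity of expectation against a fixed finite dimensional linear map.
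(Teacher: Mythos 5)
Your proposal is correct and follows essentially the same route as the paper's own proof, which simply pulls the linear map $A$ through the iterated Stratonovich integrals ``by linearity of integrals and expectation'' on each tensor level $\rho_{n}$; you merely spell out the coordinate bookkeeping and the integrability justification that the paper leaves implicit. The observation that $\tau_{\mathbb{D}}$ is the exit time of $B$ itself (so the integration domain is unchanged) is a worthwhile point of care, but it does not change the argument.
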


\begin{proof}
Note that 
\begin{align*}
 & \rho_{n}\left[S(A[B])_{0,\tau_{\mathbb{D}}}\right]\\
= & \mathbb{E}\left[\int_{0<t_{1}<\ldots<t_{n}<\tau_{\mathbb{D}}}\circ\mathrm{d}A[B_{t_{1}}^{z}]\otimes\cdots\otimes\circ\mathrm{d}A[B_{t_{n}}^{z}]\right]\\
= & \mathbf{A}\mathbb{E}\left[\int_{0<t_{1}<\ldots<t_{n}<\tau_{\mathbb{D}}}\mathrm{d}B_{t_{1}}^{z}\otimes\cdots\otimes\mathrm{d}B_{t_{n}}^{z}\right]\quad\text{by linearity of integrals and expectation}\\
= & \rho_{n}\left[\mathbf{A}[\Phi_{\mathbb{D}}(z)]\right].
\end{align*}
\end{proof}

\section{Proof of the main result }

As a first step towards solving the PDE (\ref{eq:LyonsNiPDE}), we
will use the rotation invariance of Brownian motion to reduce (\ref{eq:LyonsNiPDE})
to an ordinary differential equation. We will use the following consequence
of rotational invariance, which is a reformulation of Lemma 3.3 in
\cite{LyonsNi}:
\begin{lem}
\label{lem:Rotational-Invariance}(Rotation property) For $\theta\in[0,2\pi),$let
$R(\theta):\mathbb{R}^{2}\rightarrow\mathbb{R}^{2}$ be the rotation
map 
\[
R(\theta):\left(\begin{array}{c}
x\\
y
\end{array}\right)\rightarrow\left(\begin{array}{cc}
\cos\theta & -\sin\theta\\
\sin\theta & \cos\theta
\end{array}\right)\left(\begin{array}{c}
x\\
y
\end{array}\right).
\]
Define the linear map $\mathbf{R}\left(\theta\right):T((\mathbb{R}^{2}))\rightarrow T((\mathbb{R}^{2}))$
such that $\mathbf{R}(\theta)[\mathbf{1}]=\mathbf{1}$ and for all
$u_{1},\ldots,u_{n}\in\mathbb{R}^{2}$,
\[
\mathbf{R}(\theta)[u_{1}\otimes\cdots\otimes u_{n}]=[R(\theta)u_{1}]\otimes\cdots\otimes[R(\theta)u_{n}].
\]
Then for all $z\in\mathbb{D}$, 
\[
\Phi_{\mathbb{D}}(R(\theta)z)=\mathbf{R}(\theta)\left[\Phi_{\mathbb{D}}(z)\right].
\]
\end{lem}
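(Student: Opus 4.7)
The plan is to combine the rotational invariance of Brownian motion with Lemma \ref{lem:SignatureLinearMap}. The key observation is that if $B^{z}$ is a Brownian motion started at $z$, then the process $W_{t} := R(\theta)B^{z}_{t}$ is a Brownian motion started at $R(\theta)z$: indeed, $W_{t}-R(\theta)z = R(\theta)(B^{z}_{t}-z)$, and multiplying a standard Brownian motion by an orthogonal matrix produces another standard Brownian motion since the covariance matrix $R(\theta)R(\theta)^{T}=I$ is preserved.

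Next I would observe that the exit time of $W$ from $\mathbb{D}$ equals $\tau_{\mathbb{D}}$ (the exit time of $B^{z}$). This is because $|W_{t}|=|R(\theta)B^{z}_{t}|=|B^{z}_{t}|$ for every $t$, so $W$ lies in $\mathbb{D}$ exactly when $B^{z}$ does, and the first hitting times of $\partial\mathbb{D}$ coincide pathwise. Consequently, the Stratonovich signature of $W$ up to its exit time agrees with $S(R(\theta)B^{z})_{0,\tau_{\mathbb{D}}}$ in distribution, and in expectation the left-hand side equals $\Phi_{\mathbb{D}}(R(\theta)z)$ by definition of the expected signature.

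On the other hand, Lemma \ref{lem:SignatureLinearMap} applied with the linear map $A=R(\theta)$ gives
\[
\mathbb{E}\big[S(R(\theta)B^{z})_{0,\tau_{\mathbb{D}}}\big]=\mathbf{R}(\theta)\big[\Phi_{\mathbb{D}}(z)\big],
\]
since $\mathbf{R}(\theta)$ is by construction the tensor extension of $R(\theta)$ that appears in the statement of that lemma. Equating the two expressions for $\mathbb{E}[S(R(\theta)B^{z})_{0,\tau_{\mathbb{D}}}]$ yields the desired identity $\Phi_{\mathbb{D}}(R(\theta)z)=\mathbf{R}(\theta)[\Phi_{\mathbb{D}}(z)]$.

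The only subtle point is the first step, namely justifying carefully that $R(\theta)B^{z}$ is a Brownian motion started at $R(\theta)z$ and that its exit time from $\mathbb{D}$ coincides pathwise with $\tau_{\mathbb{D}}$. Both facts rest on the orthogonality of $R(\theta)$ and the rotational symmetry of the disc, so there is no real analytic difficulty; the rest of the argument is a direct application of Lemma \ref{lem:SignatureLinearMap}.
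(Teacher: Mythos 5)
Your proposal is correct and follows essentially the same route as the paper: the paper likewise combines the distributional identity $B^{R(\theta)z}\overset{D}{=}R(\theta)B^{z}$ with Lemma \ref{lem:SignatureLinearMap} applied to $A=R(\theta)$. Your extra care in noting that the exit times of $R(\theta)B^{z}$ and $B^{z}$ from $\mathbb{D}$ coincide pathwise (since $R(\theta)$ preserves the norm) is a detail the paper leaves implicit, and it is a welcome addition.
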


\begin{proof}
Let $B^{z}$ be two dimensional standard Brownian motion starting
at $z$. Then rotational invariance of Brownian motion states that,
as a stochastic process, we have the following equality in distribution:
\begin{equation}
B^{R(\theta)z}\overset{D}{=}R(\theta)B^{z}\label{eq:RotationalInvariance}
\end{equation}
 This means 
\begin{align*}
\mathbf{R}(\theta)\left[\Phi_{\mathbb{D}}(z)\right] & =\mathbb{E}\left[S(R(\theta)B^{z})_{0,\tau_{\mathbb{D}}}\right]\quad\text{by Lemma (\ref{lem:SignatureLinearMap})}\\
 & =\mathbb{E}\left[S(B^{R(\theta)z})_{0,\tau_{\mathbb{D}}}\right]\quad\text{by (\ref{eq:RotationalInvariance})}\\
 & =\Phi_{\mathbb{D}}(R(\theta)z).
\end{align*}
\end{proof}
The Rotation Property enables us to apply the separation of variables
method to solving Lyons-Ni PDE (\ref{eq:LyonsNiPDE}), through the
following representation:
\begin{cor}
(Separation of Variables) For all $r\in[0,1)$, $\theta\in\mathbb{R}$
and 
\[
z=R(\theta)\left(\begin{array}{c}
r\\
0
\end{array}\right),
\]
then
\begin{equation}
\Phi_{\mathbb{D}}(z)=\mathbf{R}(\theta)\left[\Phi_{\mathbb{D}}\left(\begin{array}{c}
r\\
0
\end{array}\right)\right].\label{eq:SeparationOfVariables}
\end{equation}
\end{cor}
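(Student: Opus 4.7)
The plan is to recognize that this corollary is essentially an immediate specialization of Lemma \ref{lem:Rotational-Invariance} (the Rotation Property). Given the hypotheses $r \in [0,1)$ and $\theta \in \mathbb{R}$, I would first observe that the point $w := (r, 0)^{T}$ lies in $\mathbb{D}$ (since $r^2 < 1$), so that $\Phi_{\mathbb{D}}(w)$ is defined by the framework set up in Section~2. Then, by hypothesis, $z = R(\theta) w$, and since rotations map $\mathbb{D}$ to itself, $z \in \mathbb{D}$ as well.

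Next I would apply Lemma \ref{lem:Rotational-Invariance} directly with this choice of $w$ in place of the generic $z$ appearing in its statement. That lemma gives
\[
\Phi_{\mathbb{D}}(R(\theta) w) \;=\; \mathbf{R}(\theta)\bigl[\Phi_{\mathbb{D}}(w)\bigr],
\]
and substituting $w = (r,0)^T$ and $R(\theta) w = z$ yields exactly the claimed identity \eqref{eq:SeparationOfVariables}.

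Since essentially all of the content is already packaged inside Lemma \ref{lem:Rotational-Invariance}, there is no real obstacle here; the proof is a one-line deduction. The only conceptual point worth emphasizing in the write-up is the interpretation: every $z \in \mathbb{D}$ can be expressed in polar form as $R(\theta)(r,0)^T$ for some $r \in [0,1)$ and $\theta \in \mathbb{R}$, so this corollary effectively reduces the problem of computing $\Phi_{\mathbb{D}}$ on all of $\mathbb{D}$ to computing it only along the radial ray $\{(r,0)^T : r \in [0,1)\}$. This is what will subsequently enable the separation-of-variables reduction of the PDE \eqref{eq:LyonsNiPDE} to an ODE in the radial variable $r$.
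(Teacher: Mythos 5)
Your proposal is correct and matches the paper's proof exactly: the corollary is obtained by applying Lemma \ref{lem:Rotational-Invariance} with the point $(r,0)^{T}$ playing the role of $z$. The additional checks that $(r,0)^{T}\in\mathbb{D}$ and the interpretive remarks are fine but not needed beyond the one-line deduction the paper itself gives.
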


\begin{proof}
By taking the ``$z$'' in Lemma \ref{lem:Rotational-Invariance}
as $(r\quad0)^{T}$. 
\end{proof}
We now substitute the separation of variables (\ref{eq:SeparationOfVariables})
into (\ref{eq:LyonsNiPDE}):
\begin{lem}
Define a linear map $f:T((\mathbb{R}^{2}))\rightarrow T((\mathbb{R}^{2}))$
by $f[\mathbf{1}]=0$ and 
\begin{align*}
f[u_{1}\otimes\cdots\otimes u_{n}]= & [R(\frac{\pi}{2})u_{1}]\otimes u_{2}\otimes\cdots\otimes u_{n}+u_{1}\otimes[R(\frac{\pi}{2})u_{2}]\otimes u_{3}\otimes\cdots\otimes u_{n}\\
 & +\cdots+u_{1}\otimes u_{2}\otimes\cdots\otimes[R(\frac{\pi}{2})u_{n}].
\end{align*}
Let $f^{2}:T((\mathbb{R}^{2}))\rightarrow T((\mathbb{R}^{2}))$ denote
the composition $f^{2}:=f\circ f$. Let 
\[
\phi(r)=\Phi_{\mathbb{D}}\left(\begin{array}{c}
r\\
0
\end{array}\right).
\]
`Then $\phi\left(r\right)$ is a solution to boundary value problem:
\begin{align}
 & r^{2}\phi^{\prime\prime}(r)+r\phi^{\prime}(r)+f^{2}[\phi(r)]\nonumber \\
= & -r^{2}\left(e_{1}\otimes e_{1}+e_{2}\otimes e_{2}\right)\otimes\phi\left(r\right)-2r\left(re_{1}\otimes\phi^{\prime}\left(r\right)+e_{2}\otimes f[\phi\left(r\right)]\right)\label{eq:ODE1}
\end{align}
\begin{equation}
\phi(1)=\mathbf{1}.\label{eq:BoundaryODE}
\end{equation}

Conversely, for any solution $\phi$ to the boundary problem (\ref{eq:ODE1}),
the function $\mathbf{R}(\theta)[\phi(r)]$ solves Lyons-Ni' s PDE
(\ref{eq:LyonsNiPDE}) and (\ref{eq:Boundary}) in polar coordinates.
\end{lem}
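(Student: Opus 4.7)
The plan is to take the separation-of-variables ansatz $\Phi_{\mathbb{D}}(z) = \mathbf{R}(\theta)[\phi(r)]$ and substitute it directly into Lyons-Ni's PDE (\ref{eq:LyonsNiPDE}), expressing every operator that appears on both sides in polar coordinates $(r,\theta)$. I first rewrite the Laplacian as $\Delta = \partial_r^2 + r^{-1}\partial_r + r^{-2}\partial_\theta^2$ and the Cartesian gradient via the inverse Jacobian, obtaining
\[
e_1\otimes \partial_{z_1} + e_2\otimes \partial_{z_2} \;=\; R(\theta)e_1\otimes \partial_r \;+\; r^{-1}R(\theta)e_2\otimes \partial_\theta.
\]
At the tensor level, the key identity is that $\partial_\theta$ acting on $\mathbf{R}(\theta)[X]$ produces the operator $f$: since $\frac{d}{d\theta}R(\theta)u = R(\pi/2)R(\theta)u$ and tensor products obey the Leibniz rule, I obtain $\partial_\theta \mathbf{R}(\theta)[X] = \mathbf{R}(\theta)[f[X]]$ for every $X\in T((\mathbb{R}^2))$, and hence $\partial_\theta^2\mathbf{R}(\theta)[X] = \mathbf{R}(\theta)[f^2[X]]$. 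Using that $R(\pi/2)$ and $R(\theta)$ commute, the operators $f$ and $\mathbf{R}(\theta)$ commute as well, which is what will allow me to pull $\mathbf{R}(\theta)$ out of every term at the end.

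With these ingredients, applying $\Delta$ to $\mathbf{R}(\theta)[\phi(r)]$ gives $\mathbf{R}(\theta)\bigl[\phi''(r) + r^{-1}\phi'(r) + r^{-2}f^2[\phi(r)]\bigr]$, which becomes the left-hand side of (\ref{eq:ODE1}) after multiplication by $r^2$. For the right-hand side, I use two further observations: first, the tensor $e_1\otimes e_1+e_2\otimes e_2$ is invariant under $\mathbf{R}(\theta)$ (a direct check using $R(\theta)^TR(\theta)=I$), so
\[
(e_1\otimes e_1+e_2\otimes e_2)\otimes \mathbf{R}(\theta)[\phi(r)] \;=\; \mathbf{R}(\theta)\bigl[(e_1\otimes e_1+e_2\otimes e_2)\otimes \phi(r)\bigr];
\]
second, from the polar form of the gradient above and the identity $R(\theta)e_i\otimes\mathbf{R}(\theta)[Y]=\mathbf{R}(\theta)[e_i\otimes Y]$, the Cartesian drift term becomes $\mathbf{R}(\theta)\bigl[e_1\otimes \phi'(r) + r^{-1}e_2\otimes f[\phi(r)]\bigr]$. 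Substituting these into (\ref{eq:LyonsNiPDE}) and peeling off the invertible operator $\mathbf{R}(\theta)$ from both sides yields (\ref{eq:ODE1}) after multiplying through by $r^2$. The boundary condition (\ref{eq:BoundaryODE}) is immediate from $\phi(1)=\Phi_{\mathbb{D}}((1,0)^T)=\mathbf{1}$ together with the simplified boundary condition $\Phi_{\mathbb{D}}\equiv\mathbf{1}$ on $\partial\mathbb{D}$ recalled just above the lemma.

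For the converse, I reverse the computation: given any solution $\phi$ of (\ref{eq:ODE1})--(\ref{eq:BoundaryODE}), define $\Phi(z):=\mathbf{R}(\theta)[\phi(r)]$ in polar coordinates. Applying $\mathbf{R}(\theta)$ to both sides of (\ref{eq:ODE1}) and reading the above manipulations backwards shows that $\Phi$ satisfies Lyons-Ni's PDE in polar coordinates, and $\phi(1)=\mathbf{1}$ yields $\Phi\equiv \mathbf{1}$ on $\partial\mathbb{D}$.

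The main technical obstacle, and the step I would carry out most carefully, is the identification $\partial_\theta \mathbf{R}(\theta)=\mathbf{R}(\theta)\circ f$ on $T((\mathbb{R}^2))$: this requires a clean tensor-level Leibniz computation and the commutativity of $R(\pi/2)$ with $R(\theta)$ in $\mathbb{R}^2$. Everything else is bookkeeping of polar-coordinate derivatives and checking that $\mathbf{R}(\theta)$ can be factored out of each term so that the surviving equation is genuinely a pure ODE in $r$ for the tensor-valued function $\phi$.
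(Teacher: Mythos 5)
Your proposal is correct and follows essentially the same route as the paper: pass to polar coordinates, establish the key identity $\partial_\theta\mathbf{R}(\theta)=\mathbf{R}(\theta)\circ f$ via $R'(\theta)=R(\theta)R(\frac{\pi}{2})$ and the Leibniz rule, use the rotation-invariance of $e_1\otimes e_1+e_2\otimes e_2$ and the multiplicativity of $\mathbf{R}(\theta)$ over $\otimes$ to factor $\mathbf{R}(\theta)$ out of every term, and observe the derivation is reversible for the converse. No gaps.
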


\begin{proof}
Using the polar coordinate representation of Laplacian and partial
derivatives along standard basis: 
\begin{align*}
\Delta= & \frac{\partial^{2}}{\partial r^{2}}+\frac{1}{r}\frac{\partial}{\partial r}+\frac{1}{r^{2}}\frac{\partial^{2}}{\partial\theta^{2}}\\
\frac{\partial}{\partial z_{1}}= & \cos\theta\frac{\partial}{\partial r}-\frac{\sin\theta}{r}\frac{\partial}{\partial\theta}\\
\frac{\partial}{\partial z_{2}}= & \sin\theta\frac{\partial}{\partial r}+\frac{\cos\theta}{r}\frac{\partial}{\partial\theta},
\end{align*}
we have that if 
\[
u(r,\theta)=\mathbf{\Phi_{\mathbb{D}}}(R(\theta)\left(\begin{array}{c}
r\\
0
\end{array}\right)),
\]
then
\begin{align}
 & \left[\frac{\partial^{2}u}{\partial r^{2}}+\frac{1}{r}\frac{\partial u}{\partial r}+\frac{1}{r^{2}}\frac{\partial^{2}u}{\partial\theta^{2}}\right](r,\theta)\nonumber \\
= & -\left(\sum_{i=1}^{2}e_{i}\otimes e_{i}\right)\otimes u(r,\theta)-2(\cos\theta e_{1}+\sin\theta e_{2})\otimes\frac{\partial u}{\partial r}(r,\theta)\nonumber \\
 & -2(\frac{-\sin\theta e_{1}+\cos\theta e_{2}}{r})\otimes\frac{\partial u}{\partial\theta}.\label{eq:PolarLyonsNi}
\end{align}

By substituting the Separation of Variables
\[
u(r,\theta)=\mathbf{R}(\theta)[\phi(r)],
\]
which is equivalent to (\ref{eq:SeparationOfVariables}), into (\ref{eq:PolarLyonsNi}),
we see that 
\begin{align}
 & \mathbf{R}(\theta)\left[\phi^{\prime\prime}+\frac{1}{r}\phi^{\prime}\right](r)+\frac{1}{r^{2}}\frac{\partial^{2}}{\partial\theta^{2}}\mathbf{R}(\theta)[\phi](r)\nonumber \\
= & -\left(\sum_{i=1}^{2}e_{i}\otimes e_{i}\right)\otimes\mathbf{R}(\theta)[\phi(r)]-2(\cos\theta e_{1}+\sin\theta e_{2})\otimes\mathbf{R}(\theta)[\phi^{\prime}(r)]\nonumber \\
 & -2(\frac{-\sin\theta e_{1}+\cos\theta e_{2}}{r})\otimes\frac{\partial}{\partial\theta}\mathbf{R}(\theta)[\phi(r)].\label{eq:}
\end{align}
Note that for any $u_{1},\ldots,u_{n}\in\mathbb{R}^{2}$,
\begin{align}
\frac{\partial}{\partial\theta}\mathbf{R}(\theta)[v_{1}\otimes\cdots\otimes v_{n}]= & \frac{\partial}{\partial\theta}\left\{ [R(\theta)u_{1}]\otimes\cdots\otimes[R(\theta)u_{n}]\right\} \nonumber \\
= & [R^{\prime}(\theta)u_{1}]\otimes[R(\theta)u_{2}]\otimes\cdots\otimes[R(\theta)u_{n}]+\cdots\nonumber \\
 & [R(\theta)u_{1}]\otimes\cdots\otimes[R(\theta)u_{n-1}]\otimes R^{\prime}(\theta)u_{n},\label{eq:DiffRot}
\end{align}
where 
\[
R^{\prime}(\theta)=\left(\begin{array}{cc}
-\sin\theta & -\cos\theta\\
\cos\theta & -\sin\theta
\end{array}\right)=R(\theta)\left(\begin{array}{cc}
0 & -1\\
1 & 0
\end{array}\right)=R(\theta)R(\frac{\pi}{2}).
\]
Therefore (\ref{eq:DiffRot}) becomes
\begin{align*}
\frac{\partial}{\partial\theta}\mathbf{R}(\theta)[u_{1}\otimes\cdots\otimes u_{n}]= & [R(\theta)R(\frac{\pi}{2})u_{1}]\otimes[R(\theta)u_{2}]\otimes\cdots\otimes[R(\theta)u_{n}]+\cdots\\
 & +[R(\theta)u_{1}]\otimes\cdots\otimes[R(\theta)u_{n-1}]\otimes R(\theta)R(\frac{\pi}{2})u_{n}\\
= & \mathbf{R}(\theta)f[u_{1}\otimes\cdots\otimes u_{n}].
\end{align*}
We have 
\begin{align}
 & \mathbf{R}(\theta)\left[\phi^{\prime\prime}+\frac{1}{r}\phi^{\prime}\right](r)+\frac{1}{r^{2}}\mathbf{R}(\theta)f^{2}[\phi(r)]\nonumber \\
= & -\left(\sum_{i=1}^{2}e_{i}\otimes e_{i}\right)\otimes\mathbf{R}(\theta)[\phi(r)]-2(\cos\theta e_{1}+\sin\theta e_{2})\otimes\mathbf{R}(\theta)[\phi^{\prime}(r)]\nonumber \\
 & -2(\frac{-\sin\theta e_{1}+\cos\theta e_{2}}{r})\otimes\mathbf{R}(\theta)f[\phi(r)].\label{eq:ODEPreRotation}
\end{align}
Noting that 
\begin{align}
\cos\theta e_{1}+\sin\theta e_{2} & =\mathbf{R}(\theta)e_{1}\nonumber \\
-\sin\theta e_{1}+\cos\theta e_{2} & =\mathbf{R}(\theta)e_{2}\nonumber \\
\sum_{i=1}^{2}e_{i}\otimes e_{i} & =\mathbf{R}(\theta)\left(\sum_{i=1}^{2}e_{i}\otimes e_{i}\right),\label{eq:RotationRep}
\end{align}
we have 
\begin{align}
 & \mathbf{R}(\theta)\left[\phi^{\prime\prime}+\frac{1}{r}\phi^{\prime}\right](r)+\frac{1}{r^{2}}\mathbf{R}(\theta)f^{2}[\phi(r)]\nonumber \\
= & -\mathbf{R}(\theta)\left(\sum_{i=1}^{2}e_{i}\otimes e_{i}\right)\otimes\mathbf{R}(\theta)[\phi(r)]-2\mathbf{R}(\theta)e_{1}\otimes\mathbf{R}(\theta)[\phi^{\prime}(r)]-2(\frac{\mathbf{R}(\theta)e_{2}}{r})\otimes\mathbf{R}(\theta)f[\phi(r)].\label{eq:RotatedEquation}
\end{align}
Note that $\mathbf{R}(\theta)$ is invertible with inverse map $\mathbf{R}(-\theta)$.
We now apply $\mathbf{R}(-\theta)$ on both sides of (\ref{eq:RotatedEquation})
and use the fact that for any tensor series $a$ and $b$ in $T((\mathbb{R}^{2}))$,
$\mathbf{R}(-\theta)[a\otimes b]=\mathbf{R}(-\theta)[a]\otimes\mathbf{R}(-\theta)[b]$
to obtain (\ref{eq:ODE1}).

Since $\Phi_{\mathbb{D}}(z)=\mathbf{1}$ for all $z\in\partial\mathbb{D}$,
we have $\phi(1)=\Phi_{\mathbb{D}}\left(\begin{array}{c}
1\\
0
\end{array}\right)=\mathbf{1}$.

Conversely, assume $\phi$ is any solution of (\ref{eq:ODE1}) and
(\ref{eq:BoundaryODE}). Note that the derivation between (\ref{eq:ODE1})
and (\ref{eq:RotatedEquation}) meant that 
\[
u(r,\theta)=\mathbf{\mathbf{R}}(\theta)[\phi(r)]
\]
satisfies (\ref{eq:BoundaryODE}) if and only if $\phi$ satisfies
(\ref{eq:ODE1}). If $z\in\partial\mathbb{D}$, then
\[
\Phi_{\mathbb{D}}\left(z\right)=\mathbf{R}(\theta)\phi(1)=\mathbf{R}(\theta)[\mathbf{1}]=\mathbf{1}.
\]
\end{proof}
We now try to solve the ODE (\ref{eq:ODE1}) by looking for a power
series solution (also known as the Frobenius method). 
\begin{lem}
\label{lem:TensorCoefficient}There exists a sequence $(a_{n})_{n=0}^{\infty}$,
where $a_{n}\in T((\mathbb{R}^{2}))$, such that 
\begin{equation}
\phi(r)=\sum_{n=0}^{\infty}a_{n}r^{n},\label{eq:SeriesExpansion-1}
\end{equation}
with $(a_{n})_{n=0}^{\infty}$ satisfying 
\begin{equation}
\rho_{i}[a_{n}]=0\qquad\forall n>i.\label{eq:iLessThann}
\end{equation}
\begin{equation}
f^{2}[a_{0}]=0.\label{eq:r0Coeff}
\end{equation}

\begin{equation}
a_{1}+f^{2}[a_{1}]=-2e_{2}\otimes f[a_{0}].\label{eq:r1Coeff}
\end{equation}
For $n\geq2$,

\begin{align}
 & n^{2}a_{n}+f^{2}[a_{n}]\nonumber \\
= & -\left(e_{1}\otimes e_{1}+e_{2}\otimes e_{2}\right)\otimes a_{n-2}-2\left((n-1)e_{1}\otimes a_{n-1}+e_{2}\otimes f[a_{n-1}]\right).\label{eq:rnCoeff}
\end{align}
The series (\ref{eq:SeriesExpansion-1}) has no convergence issue
because (\ref{eq:iLessThann}) implies that when restricted on each
component $(\mathbb{R}^{2})^{\otimes i}$, the sum (\ref{eq:SeriesExpansion-1})
is a finite sum .
\end{lem}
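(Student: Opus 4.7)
The plan is to invoke the polynomial result of Lyons--Ni and then derive the recurrences by matching coefficients of powers of $r$ in the ODE (\ref{eq:ODE1}).

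First, by Theorem 3.5 of \cite{LyonsNi}, $\rho_{i}[\Phi_{\mathbb{D}}(z)]$ is a polynomial in $z\in\mathbb{R}^{2}$ for each $i$. Specialising to $z=(r,0)^{T}$, each component $\rho_{i}[\phi(r)]$ is therefore a polynomial in $r$. Hence there exist tensors $\rho_{i}[a_{n}]\in(\mathbb{R}^{2})^{\otimes i}$, of which only finitely many are nonzero for each fixed $i$, such that $\rho_{i}[\phi(r)]=\sum_{n}\rho_{i}[a_{n}]\,r^{n}$. Collecting these across $i$ defines the sequence $(a_{n})_{n\geq 0}\subset T((\mathbb{R}^{2}))$ with $\phi(r)=\sum_{n=0}^{\infty}a_{n}r^{n}$, where at each tensor level the sum is finite. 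This finiteness guarantees convergence with no analytic issues. The degree bound $\rho_{i}[a_{n}]=0$ for $n>i$ would then follow from a closer inspection of the Lyons--Ni proof; alternatively, once existence and the recurrences are in hand, (\ref{eq:iLessThann}) can be verified inductively in $n$, using that $\rho_{i}[a_{0}]=\rho_{i}[\mathbf{1}]=0$ and $\rho_{i}[a_{1}]=0$ for $i\neq 1$ (as $\rho_{0}=1$, $\rho_{1}=0$), together with the fact that the right-hand side of (\ref{eq:rnCoeff}) at tensor level $i$ only involves $\rho_{i-1}[a_{n-1}]$ and $\rho_{i-2}[a_{n-2}]$.

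Next, I would substitute the power series $\phi(r)=\sum_{n\geq 0}a_{n}r^{n}$ into the ODE (\ref{eq:ODE1}). Because on each tensor level the sum is finite, term-by-term differentiation and application of the linear map $f$ are trivially justified. The left-hand side becomes
\[
\sum_{n\geq 0}\bigl(n^{2}a_{n}+f^{2}[a_{n}]\bigr)r^{n},
\]
using that $r^{2}\phi''+r\phi'=\sum n^{2}a_{n}r^{n}$. The right-hand side expands to
\[
-\sum_{n\geq 0}(e_{1}\otimes e_{1}+e_{2}\otimes e_{2})\otimes a_{n}\,r^{n+2}\;-\;\sum_{n\geq 0}2n\,e_{1}\otimes a_{n}\,r^{n+1}\;-\;\sum_{n\geq 0}2\,e_{2}\otimes f[a_{n}]\,r^{n+1}.
\]
Matching coefficients of $r^{n}$ on both sides then gives (\ref{eq:r0Coeff}) when $n=0$ (only $f^{2}[a_{0}]$ survives on the left, and the right is $0$), (\ref{eq:r1Coeff}) when $n=1$ (the $2(n-1)e_{1}\otimes a_{n-1}$ term vanishes since $n-1=0$), and (\ref{eq:rnCoeff}) for $n\geq 2$.

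The only nontrivial step is the existence of the expansion, which is imported from the polynomial property of Lyons--Ni; the remainder is purely algebraic coefficient extraction. I expect no real obstacle beyond careful bookkeeping of the indices, in particular noting that the factor $r$ (respectively $r^{2}$) in the terms $2r\cdot re_{1}\otimes\phi'$ and $2r\,e_{2}\otimes f[\phi]$ versus $r^{2}(e_{1}\otimes e_{1}+e_{2}\otimes e_{2})\otimes\phi$ produces the index shifts $n-1$ and $n-2$ that account for the different structure of (\ref{eq:r1Coeff}) and (\ref{eq:rnCoeff}).
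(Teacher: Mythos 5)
Your proposal follows essentially the same route as the paper: the existence of the expansion and the degree bound (\ref{eq:iLessThann}) are imported from Theorem 3.5 of \cite{LyonsNi} (which the paper reads as stating directly that $\rho_{i}[\Phi_{\mathbb{D}}]$ is a polynomial of degree at most $i$, so no ``closer inspection'' of that proof is needed), and the three recurrences are obtained by substituting the series into (\ref{eq:ODE1}) and matching coefficients of $r^{n}$; your bookkeeping of the index shifts is correct. One caution about the alternative inductive argument you sketch for (\ref{eq:iLessThann}): the identification $\rho_{i}[a_{0}]=\rho_{i}[\mathbf{1}]$ is wrong, since $a_{0}=\phi(0)=\Phi_{\mathbb{D}}(0)$ is the expected signature started at the centre of the disc, which has nonzero components in every even tensor degree (the boundary condition is $\phi(1)=\mathbf{1}$, not $\phi(0)=\mathbf{1}$); moreover, even after the right-hand side of (\ref{eq:rnCoeff}) is shown to vanish at tensor level $i<n$, one must still argue that $n^{2}\,\mathrm{id}+f^{2}$ is injective on $(\mathbb{R}^{2})^{\otimes i}$, which requires the spectral analysis of $f$ (eigenvalues $\beta\mathrm{i}$ with $|\beta|\leq i<n$) that the paper only develops later. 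Since your primary route does not rely on this alternative, the argument as a whole stands, but the fallback as written is not yet a proof.
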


\begin{rem}
For each $n$, $a_{n}$ is a tensor \textbf{series}. Note in particular
that $a_{n}$ is generally contains terms in $(\mathbb{R}^{2})^{\otimes k}$for
all $k$, not just $(\mathbb{R}^{2})^{\otimes n}$.
\end{rem}

\begin{proof}
Theorem 3.5 in \cite{LyonsNi} states that for each $i$, $\rho_{i}[\Phi_{\mathbb{D}}\left(z_{1},z_{2}\right)]$
is a polynomial in $z_{1}$ and $z_{2}$ of degree $n$ when $(z_{1},z_{2})^{T}\in\mathbb{D}$.
In particular, $\rho_{i}[\phi(r)]=\rho_{i}[\Phi_{\mathbb{D}}(r,0)]$
is a polynomial in $r$ of degree at most $n$. If we define $a_{n}$
such that $\rho_{i}[a_{n}]$ is the coefficient of $r^{n}$ in $\rho_{i}[\phi(r)]$,
then $\rho_{i}[a_{n}]=0$ for all $n>i$ and 
\begin{equation}
\phi(r)=\sum_{n=0}^{\infty}a_{n}r^{n}.\label{eq:SeriesExpansion}
\end{equation}

If we substitute the series expansion (\ref{eq:SeriesExpansion})
into (\ref{eq:ODE1}) and equate the coefficient of $r^{n}$ for the
case $n=0$, $n=1$ and $n\geq2$, we obtain (\ref{eq:r0Coeff}),
(\ref{eq:r1Coeff}) and (\ref{eq:rnCoeff}) respectively.
\end{proof}
We must now understand how the linear map $f$ acts on $a_{n}$. We
will do so by trying to expand $a_{n}$ in terms of eigenvectors for
$f$. 

Note that $f$ is defined in terms of $R(\frac{\pi}{2})$ and so we
start by finding the eigenvectors for $R(\frac{\pi}{2})$. If $\mathrm{i}^{2}=-1$,
then the linear map 
\[
R(\frac{\pi}{2}):\left(\begin{array}{c}
x\\
y
\end{array}\right)\rightarrow\left(\begin{array}{cc}
0 & -1\\
1 & 0
\end{array}\right)\left(\begin{array}{c}
x\\
y
\end{array}\right)
\]
 has eigenvectors 
\begin{equation}
v_{1}=\left(\begin{array}{c}
\mathrm{i}\\
1
\end{array}\right),v_{2}=\left(\begin{array}{c}
-\mathrm{i}\\
1
\end{array}\right)\label{eq:v1v2}
\end{equation}
with eigenvalues $\mathrm{i}$ and $-\mathrm{i}$ respectively. 

This means if $i_{1},\ldots,i_{j}\in\{1,2\}$, 
\begin{align*}
f[v_{i_{1}}\otimes\cdots\otimes v_{i_{m}}]= & R(\frac{\pi}{2})v_{i_{1}}\otimes v_{i_{2}}\otimes\cdots\otimes v_{i_{j}}+v_{i_{1}}\otimes R(\frac{\pi}{2})v_{i_{2}}\otimes v_{i_{3}}\otimes\cdots\otimes v_{i_{j}}\\
 & +\cdots+v_{i_{1}}\otimes\cdots\otimes v_{i_{j-1}}\otimes R(\frac{\pi}{2})v_{i_{j}}\\
= & \sum_{m=1}^{n}(-1)^{i_{m}+1}\mathrm{i}v_{i_{1}}\otimes\cdots\otimes v_{i_{j}}.
\end{align*}
Therefore for all $i_{1},\ldots,i_{j}\in\{1,2\}$, $v_{i_{1}}\otimes\cdots\otimes v_{i_{j}}$is
an eigenvector for $f$ with eigenvalue $\lambda_{i_{1},\ldots,i_{j}}=$$\left(|\{m:i_{m}=1\}|-|\{m:i_{m}=2\}|\right)\mathrm{i}$.
Note in particular that 
\[
|\lambda_{i_{1},\ldots,i_{j}}|\leq j.
\]

Note that every element $\mathbf{v}\in(\mathbb{R}^{2})^{\otimes n}$
may be expanded in terms of 
\[
\{v_{i_{1}}\otimes\cdots\otimes v_{i_{j}}:i_{m}\in\{1,2\}\}
\]
with coefficients in $\mathbb{C}$. This is because 
\[
e_{1}=-\frac{\mathrm{i}}{2}(v_{1}+v_{2}),\;e_{2}=\frac{1}{2}(v_{1}+v_{2}).
\]
and that any element of $(\mathbb{R}^{2})^{\otimes j}$ can be expanded
in terms of $e_{1}$ and $e_{2}$. Let 
\[
V_{j}^{\beta}=\mathrm{span}_{\mathbb{C}}\{v_{i_{1}}\otimes\cdots\otimes v_{i_{j}}:i_{m}\in\{1,2\},\lambda_{i_{1},\cdots,i_{j}}=\beta\mathrm{i}\}.
\]
We use the convention that 
\[
V_{0}^{\beta}=\begin{cases}
\{0\}, & \text{if }\beta\neq0,\\
\mathbb{R}, & \text{if }\beta=0.
\end{cases}
\]
The reason for the choice of this convention is that if $v\in V_{n}^{\alpha}$
and $w\in V_{m}^{\beta}$, then 
\[
v\otimes w\in V_{n+m}^{\alpha+\beta}.
\]
We define 
\[
V^{\beta}=\left\{ a\in T((\mathbb{R}^{2})):\rho_{i}(a)\in V_{i}^{\beta}\;\forall i\right\} .
\]
Let $\rho^{\beta}$ be the projection of $T((\mathbb{R}^{2}))$ onto
$V^{\beta}$ and $\rho_{i}^{\beta}$ be the projection of $T((\mathbb{R}^{2}))$
on to $V_{i}^{\beta}$. We have the decomposition
\begin{equation}
a_{n}=\sum_{\beta\in\mathbb{Z}}a_{n}^{\beta},\label{eq:Decomposition}
\end{equation}
with $a_{n}^{\beta}\in V^{\beta}$. Equation (\ref{eq:Decomposition})
should be understood as 
\[
\rho_{N}[a_{n}]=\sum_{\beta\in\mathbb{Z}}\rho_{N}[a_{n}^{\beta}],
\]
where the sum is finite since, by the definition of the eigenvalue
$\beta$, $\rho_{N}[a_{n}^{\beta}]=0$ when $|\beta|>N$.

We now write down a recurrence for $a_{n}^{\beta}$. 
\begin{lem}
\label{lem:EigenRecurrence}Let $(a_{n})_{n=0}^{\infty}$ be any sequence
of tensor series described in Lemma \ref{lem:TensorCoefficient}.
Suppose that 
\begin{equation}
a_{n}=\sum_{\beta\in\mathbb{Z}}a_{n}^{\beta}.\label{eq:EigenExpansion}
\end{equation}
Then 
\begin{equation}
a_{0}^{\beta}=0\qquad\forall\beta\neq0\label{eq:a0beta}
\end{equation}
\begin{equation}
a_{1}^{\beta}=0\qquad\forall\beta^{2}\neq1\label{eq:a1beta}
\end{equation}
For $n\geq2$, 

\begin{align}
 & (n^{2}-\beta^{2})a_{n}^{\beta}\label{eq:anbeta}\\
= & -\frac{1}{2}\left(v_{1}\otimes v_{2}+v_{2}\otimes v_{1}\right)\otimes a_{n-2}^{\beta}+(n-\beta)\mathrm{i}v_{1}\otimes a_{n-1}^{\beta-1}-(n+\beta)\mathrm{i}v_{2}\otimes a_{n-1}^{\beta+1}.
\end{align}
\end{lem}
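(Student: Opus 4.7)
The plan is to project each of \eqref{eq:r0Coeff}, \eqref{eq:r1Coeff}, \eqref{eq:rnCoeff} onto the eigenspaces $V^{\beta}$ and use three simple facts about the $v_{1}, v_{2}$ basis. First, $f$ acts on $V^{\beta}$ as multiplication by $\beta\mathrm{i}$, so $f^{2}$ acts as $-\beta^{2}$. Second, since $v_{1}\in V_{1}^{1}$ and $v_{2}\in V_{1}^{-1}$, left-tensoring by $v_{1}$ sends $V^{\gamma}$ into $V^{\gamma+1}$ and left-tensoring by $v_{2}$ sends $V^{\gamma}$ into $V^{\gamma-1}$. Third, inverting $v_{1}+v_{2}=2e_{2}$ and $v_{1}-v_{2}=2\mathrm{i}e_{1}$ gives $e_{2}=\frac{1}{2}(v_{1}+v_{2})$ and $e_{1}=-\frac{\mathrm{i}}{2}(v_{1}-v_{2})$, from which a direct expansion yields $e_{1}\otimes e_{1}+e_{2}\otimes e_{2}=\frac{1}{2}(v_{1}\otimes v_{2}+v_{2}\otimes v_{1})\in V_{2}^{0}$; in particular left-tensoring by this element preserves the $\beta$-grading.

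For $n=0$, projecting $f^{2}[a_{0}]=0$ onto $V^{\beta}$ gives $-\beta^{2}a_{0}^{\beta}=0$, so $a_{0}^{\beta}=0$ whenever $\beta\neq0$, which is \eqref{eq:a0beta}. Consequently $a_{0}=a_{0}^{0}$ and $f[a_{0}]=0$, so the right-hand side of \eqref{eq:r1Coeff} vanishes; projecting $a_{1}+f^{2}[a_{1}]=0$ onto $V^{\beta}$ yields $(1-\beta^{2})a_{1}^{\beta}=0$ and hence \eqref{eq:a1beta}.

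For $n\geq2$, I would rewrite the right-hand side of \eqref{eq:rnCoeff} entirely in the $v_{1}, v_{2}$ basis, obtaining
\[
-\frac{1}{2}(v_{1}\otimes v_{2}+v_{2}\otimes v_{1})\otimes a_{n-2}+(n-1)\mathrm{i}(v_{1}-v_{2})\otimes a_{n-1}-(v_{1}+v_{2})\otimes f[a_{n-1}].
\]
Writing $f[a_{n-1}]=\sum_{\gamma}\gamma\mathrm{i}\,a_{n-1}^{\gamma}$ and applying the shift rule, the $V^{\beta}$-projection of the $v_{1}$-pieces picks out $a_{n-1}^{\beta-1}$ with combined coefficient $[(n-1)-(\beta-1)]\mathrm{i}=(n-\beta)\mathrm{i}$, while the $v_{2}$-pieces pick out $a_{n-1}^{\beta+1}$ with combined coefficient $-[(n-1)+(\beta+1)]\mathrm{i}=-(n+\beta)\mathrm{i}$; the $a_{n-2}$ term projects to $-\frac{1}{2}(v_{1}\otimes v_{2}+v_{2}\otimes v_{1})\otimes a_{n-2}^{\beta}$ since its prefactor is already in $V_{2}^{0}$. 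The left-hand side $n^{2}a_{n}+f^{2}[a_{n}]$ projects to $(n^{2}-\beta^{2})a_{n}^{\beta}$, which delivers \eqref{eq:anbeta}.

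The main obstacle is just the sign bookkeeping at the final collection step, making sure that the contributions coming from $-2(n-1)e_{1}\otimes a_{n-1}$ and $-2e_{2}\otimes f[a_{n-1}]$ combine with the correct signs into $(n\mp\beta)\mathrm{i}$; everything else is an immediate consequence of the eigenspace decomposition already set up in the excerpt.
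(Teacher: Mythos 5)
Your proposal is correct and follows essentially the same route as the paper: substitute $e_{1}=-\tfrac{\mathrm{i}}{2}(v_{1}-v_{2})$, $e_{2}=\tfrac{1}{2}(v_{1}+v_{2})$ and the decomposition $a_{n}=\sum_{\beta}a_{n}^{\beta}$ into the three recurrences of Lemma \ref{lem:TensorCoefficient}, use that $f$ acts as $\beta\mathrm{i}$ on $V^{\beta}$ and that tensoring on the left by $v_{1}$, $v_{2}$, or $\tfrac{1}{2}(v_{1}\otimes v_{2}+v_{2}\otimes v_{1})$ shifts the grading by $+1$, $-1$, or $0$ respectively, then project onto $V^{\beta}$. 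The sign and index bookkeeping in your final collection step matches the paper's computation exactly.
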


\begin{proof}
Note that with $v_{1}$ and $v_{2}$ as defined in (\ref{eq:v1v2}),
then 
\begin{equation}
e_{1}=-\mathrm{i}\frac{v_{1}-v_{2}}{2},\quad e_{2}=\frac{v_{1}+v_{2}}{2}.\label{eq:e1e2v1v2}
\end{equation}

Substituting (\ref{eq:e1e2v1v2}) and $a_{n}=\sum_{\beta}a_{n}^{\beta}$
into (\ref{eq:r0Coeff}), (\ref{eq:r1Coeff}) and (\ref{eq:rnCoeff})
we have 

\begin{equation}
\sum_{\beta}(-\beta^{2})a_{0}^{\beta}=0.\label{eq:r0Coeff-1}
\end{equation}

\begin{equation}
\sum_{\beta}(1-\beta^{2})a_{1}^{\beta}=-2(v_{1}+v_{2})\otimes\sum_{\beta}\beta\mathrm{i}a_{0}^{\beta}.\label{eq:r1Coeff-1}
\end{equation}
By projecting onto $V^{\beta}$, we obtain (\ref{eq:a0beta}) and
(\ref{eq:a1beta}). 

For $n\geq2$,
\begin{align}
 & \sum_{\beta}(n^{2}-\beta^{2})a_{n}^{\beta}\nonumber \\
= & -\frac{1}{2}\left(v_{1}\otimes v_{2}+v_{2}\otimes v_{1}\right)\otimes\sum_{\beta}a_{n-2}^{\beta}\label{eq:rnCoeff-2}\\
 & -\left(-\mathrm{i}(v_{1}-v_{2})\otimes(n-1)\sum_{\beta}a_{n-1}^{\beta}+(v_{1}+v_{2})\otimes\sum_{\beta}\beta\mathrm{i}a_{n-1}^{\beta}\right)\\
= & -\frac{1}{2}\left(v_{1}\otimes v_{2}+v_{2}\otimes v_{1}\right)\otimes\sum_{\beta}a_{n-2}^{\beta}+\mathrm{i}v_{1}\otimes\big[(n-1-(\beta-1))\sum_{\beta}a_{n-1}^{\beta-1}\big]\\
 & +\mathrm{i}v_{2}\otimes\big[(n-1+(\beta+1))\sum_{\beta}a_{n-1}^{\beta+1}\big].
\end{align}

Projecting to $V^{\beta}$ on both sides, we obtain (\ref{eq:anbeta}). 
\end{proof}
We note from (\ref{eq:a0beta}) and (\ref{eq:a1beta}) that most tensor
terms of $a_{0}$ and $a_{1}$ are zeros. There are natural generalisation
of this observation for generic $n$. 
\begin{cor}
\label{cor:UpperLowerLimit}If $n<|\beta|$, then $a_{n}^{\beta}=0$. 
\end{cor}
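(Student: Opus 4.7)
The plan is to prove this by strong induction on $n$, using the recurrence (\ref{eq:anbeta}) from Lemma \ref{lem:EigenRecurrence}.

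For the base cases $n=0$ and $n=1$, the claim reduces exactly to what was already established. The condition $n < |\beta|$ with $n = 0$ forces $\beta \neq 0$, which is precisely (\ref{eq:a0beta}). For $n = 1$, the condition $|\beta| > 1$ means $\beta^{2} \geq 4$, in particular $\beta^{2} \neq 1$, and (\ref{eq:a1beta}) gives $a_{1}^{\beta} = 0$.

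For the inductive step, fix $n \geq 2$ and $\beta$ with $n < |\beta|$, and assume the statement for all smaller indices. I would look at (\ref{eq:anbeta}) and argue that each of the three tensor terms on the right-hand side vanishes: for the first term, $n - 2 < n < |\beta|$ so $a_{n-2}^{\beta} = 0$ by induction; for the second, one checks that $n - 1 < |\beta - 1|$ (splitting into the cases $\beta \geq 1$ and $\beta \leq -1$, noting the case $\beta = 0$ is vacuous since then $|\beta|=0 \leq n$), so $a_{n-1}^{\beta - 1} = 0$ by induction; similarly $n - 1 < |\beta + 1|$ forces $a_{n-1}^{\beta + 1} = 0$. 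Hence the right-hand side of (\ref{eq:anbeta}) is zero. Since $n < |\beta|$ implies $n^{2} - \beta^{2} \neq 0$, we may divide and conclude $a_{n}^{\beta} = 0$.

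The only real subtlety is the case analysis verifying $n - 1 < |\beta \pm 1|$ when $n < |\beta|$, which is a short check on the two signs of $\beta$; everything else is just bookkeeping. Nothing here should present a genuine obstacle, as the structure of the recurrence was set up precisely so that the ``eigenvalue shift'' $\beta \mapsto \beta \pm 1$ is matched by a factor $(n \mp \beta)\mathrm{i}$ and a shift in $n$, keeping the inequality $n < |\beta|$ preserved under the induction.
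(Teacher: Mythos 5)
Your proof is correct and follows essentially the same route as the paper: induction on $n$ via the recurrence (\ref{eq:anbeta}), with base cases $n=0,1$ given by (\ref{eq:a0beta}) and (\ref{eq:a1beta}), and the inductive step showing all three terms on the right-hand side vanish before dividing by $n^{2}-\beta^{2}\neq0$. The paper states the inequalities $n-1<|\beta\pm1|$ without the sign case-check you supply, but the argument is the same.
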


\begin{proof}
We will prove by induction on $n$ using the recurrence 
\begin{align*}
 & (n^{2}-\beta^{2})a_{n}^{\beta}\\
= & -\frac{1}{2}\left(v_{1}\otimes v_{2}+v_{2}\otimes v_{1}\right)\otimes a_{n-2}^{\beta}+(n-\beta)\mathrm{i}v_{1}\otimes a_{n-1}^{\beta-1}-(n+\beta)\mathrm{i}v_{2}\otimes a_{n-1}^{\beta+1}.
\end{align*}

The base induction cases $n=0$ and $n=1$ are part of Lemma \ref{lem:EigenRecurrence}.

If $n<|\beta|$, then $n-2<|\beta|$ and $n-1<|\beta-1|$ and $n-1<|\beta+1|$,
so by induction hypothesis, $a_{n-1}^{\beta-1}=a_{n-2}^{\beta}=a_{n-1}^{\beta+1}=0$. 
\end{proof}
As we now see, the symmetry of Brownian motion means that unless $n$
and $\beta$ have the same parity, $a_{n}^{\beta}$ is in fact equal
to zero. 
\begin{lem}
\label{lem:SameParity}If $n-\beta$ is odd, then $a_{n}^{\beta}=0$. 
\end{lem}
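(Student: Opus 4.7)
The plan is to prove the parity statement by strong induction on $n$, using the recurrence for $a_n^\beta$ established in Lemma \ref{lem:EigenRecurrence}. The key observation is that every term in that recurrence shifts both $n$ and $\beta$ by the same total parity, so the odd-parity condition $n-\beta \in 2\mathbb{Z}+1$ propagates cleanly.

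For the base cases, I would first read off $n=0$ and $n=1$ from Lemma \ref{lem:EigenRecurrence}. If $n=0$ and $n-\beta=-\beta$ is odd, then $\beta \neq 0$, so $a_0^\beta=0$ by (\ref{eq:a0beta}). If $n=1$ and $1-\beta$ is odd, then $\beta$ is even, in particular $\beta^2 \neq 1$, so $a_1^\beta=0$ by (\ref{eq:a1beta}).

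For the inductive step, suppose $n \geq 2$ and $n-\beta$ is odd. I would first note that $n^2-\beta^2 \neq 0$: indeed, $\beta = \pm n$ would force $n-\beta$ to be even (equal to $0$ or $2n$), contradicting the hypothesis. Hence the recurrence (\ref{eq:anbeta}) can be inverted to solve for $a_n^\beta$, and it suffices to check that each term on the right-hand side vanishes. For this I would verify the parities: $(n-2)-\beta = (n-\beta)-2$ is odd, so $a_{n-2}^\beta=0$ by the inductive hypothesis; $(n-1)-(\beta-1) = n-\beta$ is odd, so $a_{n-1}^{\beta-1}=0$; and $(n-1)-(\beta+1) = (n-\beta)-2$ is odd, so $a_{n-1}^{\beta+1}=0$. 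This forces $a_n^\beta=0$, completing the induction.

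There is no real obstacle here; the only point that requires attention is the observation that $n^2 \neq \beta^2$ under the parity hypothesis, which prevents the left-hand side of the recurrence from degenerating. The result is essentially a bookkeeping consequence of the fact that tensoring by $v_1$ or $v_2$ increments the tensor length by one while shifting the eigenvalue index $\beta$ by $\pm 1$, and the quadratic $v_1 \otimes v_2 + v_2 \otimes v_1$ term increments the length by two while preserving $\beta$.
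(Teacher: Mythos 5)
Your proof is correct, but it takes a genuinely different route from the paper. The paper deduces the parity constraint from the reflection symmetry of Brownian motion: applying Lemma \ref{lem:SignatureLinearMap} with $A(z)=-z$ gives $\mathbf{A}[\phi(r)]=\phi(-r)$, and since $\mathbf{A}$ acts on $V^{\beta}$ as multiplication by $(-1)^{\beta}$, comparing coefficients yields $(-1)^{\beta}a_{n}^{\beta}=(-1)^{n}a_{n}^{\beta}$. Your argument instead runs a strong induction on $n$ through the recurrence (\ref{eq:anbeta}), with base cases (\ref{eq:a0beta}) and (\ref{eq:a1beta}); the key points --- that $n^{2}\neq\beta^{2}$ when $n-\beta$ is odd (indeed $n+\beta=(n-\beta)+2\beta$ is then also odd), and that each of the three source terms $a_{n-2}^{\beta}$, $a_{n-1}^{\beta-1}$, $a_{n-1}^{\beta+1}$ has odd index difference --- are all verified correctly, and the shape of the induction mirrors that of Corollary \ref{cor:UpperLowerLimit}. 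The paper's proof is shorter and explains \emph{why} the constraint holds (it is the $z\mapsto-z$ symmetry of the process), whereas yours is purely algebraic and therefore somewhat more general: it shows the parity vanishing is already forced by the recurrence system and its $n=0,1$ conditions alone, for any solution of that system, without returning to the probabilistic definition of $\Phi_{\mathbb{D}}$. Either proof is acceptable here.
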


\begin{proof}
Applying Lemma \ref{lem:SignatureLinearMap} with the linear map $A(z)=-z$,
we have 

\begin{align*}
\mathbf{A}[\phi(r)] & =\mathbf{A}[\Phi_{\mathbb{D}}\left(\begin{array}{c}
r\\
0
\end{array}\right)]\\
 & =\mathbb{E}[S(A[B^{(r,0)}])_{0,\tau_{\mathbb{D}}}]\\
 & =\mathbb{E}[S(-B^{(r,0)})_{0,\tau_{\mathbb{D}}}]\\
 & =\mathbb{E}[S(B^{(-r,0)})_{0,\tau_{\mathbb{D}}}]\\
 & =\phi(-r).
\end{align*}
Note that for any $a\in V^{\beta}$, $\mathbf{A}[a]=(-1)^{\beta}a$.
Since $\mathbf{A}[a_{n}^{\beta}]=(-1)^{\beta}a_{n}^{\beta}$, we may
substitute $\phi(r)=\sum_{n=0}^{\infty}\sum_{\beta}a_{n}^{\beta}r^{n}$
to get 
\[
\sum_{n=0}^{\infty}\sum_{\beta}(-1)^{\beta}a_{n}^{\beta}r^{n}=\sum_{n=0}^{\infty}\sum_{\beta}a_{n}^{\beta}(-r)^{n}.
\]
By equating the coefficient of $r^{n}$ and projecting to $V^{\beta}$,
$(-1)^{\beta}a_{n}^{\beta}=(-1)^{n}a_{n}^{\beta}$, or equivalently
that $a_{n}^{\beta}=0$ if $n-\beta$ is odd. 
\end{proof}
We now start to solve (\ref{eq:anbeta}). 
\begin{lem}
Suppose that $(a_{n}^{\beta})_{n\in\mathbb{N}\cup\{0\},\beta\in\mathbb{Z}}$
satisfies the recurrence relations (\ref{eq:a0beta}), (\ref{eq:a1beta})
and (\ref{eq:anbeta}). For $n\geq|\beta|$, if we define $(b_{n}^{\beta})_{n\in\mathbb{N}\cup\{0\},\beta\in\mathbb{Z}}$
by
\begin{equation}
a_{n}^{\beta}=\frac{(-1)^{\frac{n-\beta}{2}}}{(\frac{n-\beta}{2})!(\frac{n+\beta}{2})!}\frac{1}{2^{n}}b_{n}^{\beta}\label{eq:Defineb}
\end{equation}
then for all $w\in\mathbb{C}$, 
\begin{align}
 & \sum_{\beta}\sum_{n\geq|\beta|}b_{n}^{\beta}w^{n}\nonumber \\
= & (\mathbf{1}-\mathrm{i}w(v_{1}+v_{2})-\frac{w^{2}}{2}\left(v_{1}\otimes v_{2}+v_{2}\otimes v_{1}\right))^{-1}\otimes\label{eq:BnGenerating}\\
 & \left[(\mathbf{1}-\mathrm{i}wv_{1})\otimes\sum_{\beta=1}^{\infty}b_{|\beta|}^{\beta}w^{\beta}+(\mathbf{1}-\mathrm{i}wv_{2})\otimes\sum_{\beta=-1}^{-\infty}b_{|\beta|}^{\beta}w^{|\beta|}+(\mathbf{1}-\mathrm{i}wv_{1}-\mathrm{i}wv_{2})\otimes b_{0}^{0}\right]
\end{align}
\end{lem}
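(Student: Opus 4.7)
The plan is to convert the recurrence (\ref{eq:anbeta}) for $a_n^\beta$ into a cleaner recurrence for $b_n^\beta$ using the substitution (\ref{eq:Defineb}), then turn this recurrence into an identity of generating functions.

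First I would derive the recurrence for $b_n^\beta$. Setting $p = (n-\beta)/2$ and $q = (n+\beta)/2$, one has $n-\beta = 2p$, $n+\beta = 2q$, and $n^2-\beta^2 = 4pq$. For $n > |\beta|$ (so $p,q \geq 1$), I would substitute (\ref{eq:Defineb}) into (\ref{eq:anbeta}), noting that each of the four tensor terms produces a common prefactor $\frac{(-1)^p}{p!\,q!\,2^n}$ once one uses $2p/p! = 2/(p-1)!$ on the $v_1$ term and $2q/q! = 2/(q-1)!$ on the $v_2$ term, and similarly $1/(4pq \cdot (p-1)!(q-1)!) = 1/(4 \cdot p!q!)$ on the $v_1 \otimes v_2 + v_2 \otimes v_1$ term. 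After cancelling this common prefactor, the recurrence becomes
\begin{equation*}
b_n^\beta = \tfrac{1}{2}(v_1 \otimes v_2 + v_2 \otimes v_1) \otimes b_{n-2}^\beta + \mathrm{i} v_1 \otimes b_{n-1}^{\beta-1} + \mathrm{i} v_2 \otimes b_{n-1}^{\beta+1}, \qquad n > |\beta|.
\end{equation*}
Note that the recurrence is only meaningful for $n > |\beta|$: at $n = |\beta|$ both sides of (\ref{eq:anbeta}) are zero, so $b_{|\beta|}^\beta$ plays the role of a free initial condition.

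Next I would form the generating function $G(w) = \sum_\beta \sum_{n \geq |\beta|} b_n^\beta w^n$, multiply the recurrence by $w^n$, and sum over all valid pairs $(\beta, n)$. The subtle step is bookkeeping the index shifts. Reindexing $m = n-2$ in the second term yields exactly $w^2 G(w)$. For the $\mathrm{i} v_1 \otimes b_{n-1}^{\beta-1}$ term, reindexing by $m = n-1$, $\gamma = \beta - 1$ gives sum over pairs with $m + 1 > |\gamma + 1|$; combined with $m \geq |\gamma|$ (needed for non-vanishing of $b_m^\gamma$), this excludes exactly the diagonal terms with $\gamma \geq 0$ and $m = |\gamma|$. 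Thus this sum equals $w\bigl(G(w) - \sum_{\beta \geq 0} b_{|\beta|}^\beta w^{|\beta|}\bigr)$. A symmetric analysis for the $v_2$ term excludes the $\beta \leq 0$ diagonal. The LHS sum equals $G(w) - \sum_\beta b_{|\beta|}^\beta w^{|\beta|}$.

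Collecting, all terms involving $G(w)$ land on the left as $[\mathbf{1} - \mathrm{i} w(v_1 + v_2) - \tfrac{w^2}{2}(v_1 \otimes v_2 + v_2 \otimes v_1)] \otimes G(w)$. The RHS consists of three boundary contributions: splitting $\sum_\beta b_{|\beta|}^\beta w^{|\beta|} = A + b_0^0 + C$ where $A = \sum_{\beta \geq 1} b_\beta^\beta w^\beta$ and $C = \sum_{\beta \leq -1} b_{|\beta|}^\beta w^{|\beta|}$, the $b_0^0$ term picks up both $-\mathrm{i} w v_1$ and $-\mathrm{i} w v_2$ corrections, whereas $A$ is paired only with $-\mathrm{i} w v_1$ and $C$ only with $-\mathrm{i} w v_2$. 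This yields precisely the bracketed expression in (\ref{eq:BnGenerating}). Finally, since the operator $\mathbf{1} - \mathrm{i} w (v_1 + v_2) - \tfrac{w^2}{2}(v_1 \otimes v_2 + v_2 \otimes v_1)$ has scalar part $1$ and higher-degree tensor perturbation, it is invertible in $T((\mathbb{R}^2))$ via the Neumann series, and applying this inverse gives (\ref{eq:BnGenerating}).

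The main obstacle is not the algebra of step one, which is routine but requires careful tracking of factorial identities, but rather the bookkeeping in step two: the restriction $n > |\beta|$ in the recurrence interacts asymmetrically with the vanishing condition $n \geq |\beta|$ after the index shifts by $\pm 1$ in $\beta$, and one must confirm that exactly the right diagonal terms $b_{|\beta|}^\beta$ get subtracted so that the three boundary groups appearing in (\ref{eq:BnGenerating}) assemble cleanly with the correct prefactors $\mathbf{1} - \mathrm{i} w v_1$, $\mathbf{1} - \mathrm{i} w v_2$, and $\mathbf{1} - \mathrm{i} w (v_1 + v_2)$.
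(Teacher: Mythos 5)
Your proposal is correct and follows essentially the same route as the paper: substitute (\ref{eq:Defineb}) into (\ref{eq:anbeta}) to get the clean recurrence $b_{n}^{\beta}=\tfrac{1}{2}(v_{1}\otimes v_{2}+v_{2}\otimes v_{1})\otimes b_{n-2}^{\beta}+\mathrm{i}v_{1}\otimes b_{n-1}^{\beta-1}+\mathrm{i}v_{2}\otimes b_{n-1}^{\beta+1}$, sum against $w^{n}$, track which diagonal terms $b_{|\beta|}^{\beta}$ are excluded by the index shifts (the paper does this by splitting into the cases $\beta\geq1$, $\beta\leq-1$, $\beta=0$ rather than by your reindexing argument, but the outcome is identical), and invert the resulting tensor series via its Neumann series. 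Your factorial bookkeeping and the identification of exactly the $\gamma\geq0$ (resp.\ $\gamma\leq0$) diagonals as the excluded terms for the $v_{1}$ (resp.\ $v_{2}$) shift both check out.
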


\begin{rem}
The infinite sum $\sum_{\beta}\sum_{n\geq|\beta|}b_{n}^{\beta}w^{n}$
is well-defined because $\sum_{\beta}\sum_{n\geq|\beta|}\rho_{i}[b_{n}^{\beta}]w^{n}$
is a finite sum
\[
\sum_{\beta}\sum_{n\geq|\beta|}\rho_{i}[b_{n}^{\beta}]w^{n}=\sum_{\beta=-i}^{i}\sum_{n=|\beta|}^{i}\rho_{i}[b_{n}^{\beta}]w^{n},
\]
due to Corollary \ref{cor:UpperLowerLimit} and (\ref{eq:iLessThann}).
Likewise, 
\[
\sum_{\beta=1}^{\infty}\rho_{i}[b_{|\beta|}^{\beta}]w^{\beta}=\sum_{\beta=1}^{i}\rho_{i}[b_{|\beta|}^{\beta}]w^{\beta}\quad\sum_{\beta=-1}^{-\infty}\rho_{i}[b_{|\beta|}^{\beta}]w^{|\beta|}=\sum_{\beta=-1}^{-i}\rho_{i}[b_{|\beta|}^{\beta}]w^{|\beta|}.
\]
\end{rem}

\begin{rem}
We will determine the unknown coefficients $(b_{|k|}^{k})_{k\in\mathbb{Z}}$
using the boundary condition later on. 
\end{rem}

\begin{proof}
Substituting (\ref{eq:Defineb}) into (\ref{eq:anbeta}) to get for
$n\geq|\beta|+2$,
\begin{align*}
\frac{(-1)^{\frac{n-\beta}{2}}(n^{2}-\beta^{2})}{(\frac{n-\beta}{2})!(\frac{n+\beta}{2})!}\frac{1}{2^{n}}b_{n}^{\beta} & =-\frac{1}{2}\left(v_{1}\otimes v_{2}+v_{2}\otimes v_{1}\right)\otimes\frac{(-1)^{\frac{n-\beta}{2}-1}}{(\frac{n-\beta}{2}-1)!(\frac{n+\beta}{2}-1)!2^{n-2}}b_{n-2}^{\beta}\\
 & +(n-\beta)\mathrm{i}v_{1}\otimes\frac{(-1)^{\frac{n-\beta}{2}}}{(\frac{n-\beta}{2})!(\frac{n+\beta}{2}-1)!2^{n-1}}b_{n-1}^{\beta-1}\\
 & -(n+\beta)\mathrm{i}v_{2}\otimes\frac{(-1)^{\frac{n-\beta}{2}-1}}{(\frac{n-\beta}{2}-1)!(\frac{n+\beta}{2})!2^{n-1}}b_{n-1}^{\beta+1}.
\end{align*}
Simplifying, we have for $n\geq|\beta|+2$,
\begin{align*}
b_{n}^{\beta} & =\frac{1}{2}\left(v_{1}\otimes v_{2}+v_{2}\otimes v_{1}\right)\otimes b_{n-2}^{\beta}+\mathrm{i}v_{1}\otimes b_{n-1}^{\beta-1}+\mathrm{i}v_{2}\otimes b_{n-1}^{\beta+1}.
\end{align*}
Let $w\in\mathbb{C}$. Multiplying by $w^{n}$ and summing over all
$n\geq|\beta|+2$, we have 
\begin{align*}
\sum_{n\geq|\beta|+2}b_{n}^{\beta}w^{n} & =\frac{w^{2}}{2}\left(v_{1}\otimes v_{2}+v_{2}\otimes v_{1}\right)\otimes\sum_{n\geq|\beta|+2}b_{n-2}^{\beta}w^{n-2}+\mathrm{i}wv_{1}\otimes\sum_{n\geq|\beta|+2}b_{n-1}^{\beta-1}w^{n-1}\\
 & +\mathrm{i}wv_{2}\otimes\sum_{n\geq|\beta|+2}b_{n-1}^{\beta+1}w^{n-1},
\end{align*}
and therefore using that $b_{n}^{\beta}=0$ when $n-\beta$ is odd
(see Lemma \ref{lem:SameParity}),
\begin{align*}
\sum_{n\geq|\beta|}b_{n}^{\beta}w^{n} & =b_{|\beta|}^{\beta}w^{|\beta|}+\frac{w^{2}}{2}\left(v_{1}\otimes v_{2}+v_{2}\otimes v_{1}\right)\otimes\sum_{n\geq|\beta|}b_{n}^{\beta}w^{n}+\mathrm{i}wv_{1}\otimes\sum_{n\geq|\beta|+1}b_{n}^{\beta-1}w^{n}\\
 & +\mathrm{i}wv_{2}\otimes\sum_{n\geq|\beta|+1}b_{n}^{\beta+1}w^{n}.
\end{align*}
If $\beta\geq1$, then $|\beta|+1=\beta+1$ and $\beta-1=|\beta-1|$
. Hence for $\beta\geq1,$
\begin{align*}
\sum_{n\geq|\beta|}b_{n}^{\beta}w^{n} & =b_{|\beta|}^{\beta}w^{\beta}-\mathrm{i}wv_{1}\otimes b_{|\beta-1|}^{\beta-1}w^{|\beta-1|}+\frac{w^{2}}{2}\left(v_{1}\otimes v_{2}+v_{2}\otimes v_{1}\right)\otimes\sum_{n\geq|\beta|}b_{n}^{\beta}w^{n}\\
 & +\mathrm{i}wv_{1}\otimes\sum_{n\geq|\beta-1|}b_{n}^{\beta-1}w^{n}+\mathrm{i}wv_{2}\otimes\sum_{n\geq|\beta+1|}b_{n}^{\beta+1}w^{n}.
\end{align*}
For $\beta\leq-1$, then $|\beta|+1=|\beta-1|$, $|\beta|+1=1-\beta$
and $-1-\beta\geq0$. Hence for $\beta\leq-1$, 
\begin{align*}
\sum_{n\geq|\beta|}b_{n}^{\beta}w^{n} & =b_{|\beta|}^{\beta}w^{|\beta|}-\mathrm{i}wv_{2}\otimes b_{-\beta-1}^{\beta+1}w^{-\beta-1}+\frac{w^{2}}{2}\left(v_{1}\otimes v_{2}+v_{2}\otimes v_{1}\right)\otimes\sum_{n\geq|\beta|}b_{n}^{\beta}w^{n}\\
 & +\mathrm{i}wv_{1}\otimes\sum_{n\geq|\beta-1|}b_{n}^{\beta-1}w^{n}+\mathrm{i}wv_{2}\otimes\sum_{n\geq|\beta+1|}b_{n}^{\beta+1}w^{n}.
\end{align*}
For $\beta=0$: 
\begin{align*}
\sum_{n\geq|\beta|}b_{n}^{\beta}w^{n} & =b_{|\beta|}^{\beta}w^{|\beta|}+\frac{w^{2}}{2}\left(v_{1}\otimes v_{2}+v_{2}\otimes v_{1}\right)\otimes\sum_{n\geq|\beta|}b_{n}^{\beta}w^{n}\\
 & +\mathrm{i}wv_{1}\otimes\sum_{n\geq|\beta-1|}b_{n}^{\beta-1}w^{n}+\mathrm{i}wv_{2}\otimes\sum_{n\geq|\beta+1|}b_{n}^{\beta+1}w^{n}.
\end{align*}
Summing over all $\beta\in\mathbb{Z}$,
\begin{align*}
\sum_{\beta}\sum_{n\geq|\beta|}b_{n}^{\beta}w^{n} & =\sum_{\beta}b_{|\beta|}^{\beta}w^{\beta}-\mathrm{i}wv_{1}\otimes\sum_{\beta\geq1}b_{|\beta-1|}^{\beta-1}w^{\beta-1}-\mathrm{i}wv_{2}\otimes\sum_{\beta\leq-1}b_{-\beta-1}^{\beta+1}w^{-\beta-1}\\
 & +\frac{w^{2}}{2}\left(v_{1}\otimes v_{2}+v_{2}\otimes v_{1}\right)\otimes\sum_{\beta}\sum_{n\geq|\beta|}b_{n}^{\beta}w^{n}\\
 & +\mathrm{i}wv_{1}\otimes\sum_{\beta}\sum_{n\geq|\beta-1|}b_{n}^{\beta-1}w^{n}+\mathrm{i}wv_{2}\otimes\sum_{\beta}\sum_{n\geq|\beta+1|}b_{n}^{\beta+1}w^{n}.
\end{align*}
Noting that 
\[
\sum_{n\geq|\beta-1|}b_{n}^{\beta-1}w^{n}=\sum_{n\geq|\beta||}b_{n}^{\beta}w^{n},\quad\sum_{n\geq|\beta+1|}b_{n}^{\beta+1}w^{n}=\sum_{n\geq|\beta|}b_{n}^{\beta}w^{n},
\]
we have 
\begin{align*}
 & (\mathbf{1}-\mathrm{i}w(v_{1}+v_{2})-\frac{w^{2}}{2}\left(v_{1}\otimes v_{2}+v_{2}\otimes v_{1}\right))\otimes\sum_{\beta}\sum_{n\geq|\beta|}b_{n}^{\beta}w^{n}\\
= & \sum_{\beta}b_{|\beta|}^{\beta}w^{\beta}-\mathrm{i}wv_{1}\otimes\sum_{\beta\geq1}b_{|\beta-1|}^{\beta-1}w^{\beta-1}-\mathrm{i}wv_{2}\otimes\sum_{\beta\leq-1}b_{-\beta-1}^{\beta+1}w^{-\beta-1}.
\end{align*}
As the tenor series $(\mathbf{1}-\mathrm{i}w(v_{1}+v_{2})-\frac{w^{2}}{2}\left(v_{1}\otimes v_{2}+v_{2}\otimes v_{1}\right))$
is invertible with respect $\otimes$, 
\begin{align*}
 & \sum_{\beta}\sum_{n\geq|\beta|}b_{n}^{\beta}w^{n}\\
= & (\mathbf{1}-\mathrm{i}w(v_{1}+v_{2})-\frac{w^{2}}{2}\left(v_{1}\otimes v_{2}+v_{2}\otimes v_{1}\right))^{-1}\\
 & \otimes\left[\sum_{\beta}b_{|\beta|}^{\beta}w^{\beta}-\mathrm{i}wv_{1}\otimes\sum_{\beta\geq0}b_{|\beta|}^{\beta}w^{\beta}-\mathrm{i}wv_{2}\otimes\sum_{\beta\leq0}b_{|\beta|}^{\beta}w^{-\beta}\right]\\
= & (\mathbf{1}-\mathrm{i}w(v_{1}+v_{2})-\frac{w^{2}}{2}\left(v_{1}\otimes v_{2}+v_{2}\otimes v_{1}\right))^{-1}\\
 & \otimes\left[(\mathbf{1}-\mathrm{i}wv_{1})\otimes\sum_{\beta=1}^{\infty}b_{|\beta|}^{\beta}w^{\beta}+(\mathbf{1}-\mathrm{i}wv_{2})\otimes\sum_{\beta=-1}^{-\infty}b_{|\beta|}^{\beta}w^{|\beta|}+(\mathbf{1}-\mathrm{i}wv_{1}-\mathrm{i}wv_{2})\otimes b_{0}^{0}\right].
\end{align*}
\end{proof}
\begin{lem}
For all $\beta\in\mathbb{Z}$ and $n\in\mathbb{N}\cup\{0\}$,
\begin{align}
b_{n}^{\beta} & =b_{|\beta|}^{\beta}1_{n=|\beta|}+\sum_{k=-n}^{n}\big\{\rho_{n-|k|}^{\beta-k}\big((\mathbf{1}-\mathrm{i}(v_{1}+v_{2})-\frac{1}{2}\left(v_{1}\otimes v_{2}+v_{2}\otimes v_{1}\right))^{-1}\label{eq:bnFormula}\\
 & \otimes[\frac{1}{2}(v_{1}\otimes v_{2}+v_{2}\otimes v_{1})+\mathrm{i}v_{2}1_{\{k\geq1\}}+\mathrm{i}v_{1}1_{\{k\leq-1\}}]\big)\otimes b_{|k|}^{k}\big\}.
\end{align}
\end{lem}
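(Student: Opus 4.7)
The plan is to extract coefficients from the generating function identity (\ref{eq:BnGenerating}) proved in the previous lemma. The key algebraic observation I would use first is the following decomposition of the prefix factors appearing in the bracket on the right-hand side of (\ref{eq:BnGenerating}): writing
$$P(w) := \mathbf{1} - \mathrm{i}w(v_1+v_2) - \tfrac{w^2}{2}\left(v_1\otimes v_2 + v_2\otimes v_1\right),$$
$$N_k(w) := \tfrac{w^2}{2}\left(v_1\otimes v_2 + v_2\otimes v_1\right) + \mathrm{i}wv_2\,1_{\{k\geq 1\}} + \mathrm{i}wv_1\,1_{\{k\leq -1\}},$$
a direct check in the three cases $k\geq 1$, $k\leq -1$, $k=0$ shows that $\mathbf{1}-\mathrm{i}wv_1 = P(w) + N_k(w)$ whenever $k\geq 1$, $\mathbf{1}-\mathrm{i}wv_2 = P(w) + N_k(w)$ whenever $k\leq -1$, and $\mathbf{1}-\mathrm{i}wv_1-\mathrm{i}wv_2 = P(w) + N_0(w)$. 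Substituting and using $P(w)^{-1}\otimes P(w) = \mathbf{1}$ rewrites (\ref{eq:BnGenerating}) as
$$\sum_\beta\sum_{n\geq|\beta|} b_n^\beta w^n = \sum_{k\in\mathbb{Z}} b_{|k|}^k w^{|k|} + \sum_{k\in\mathbb{Z}} P(w)^{-1}\otimes N_k(w)\otimes b_{|k|}^k w^{|k|}.$$

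Next I would apply $\rho^\beta$ and extract the coefficient of $w^n$ on both sides. The left-hand side gives $b_n^\beta$. On the right, the first sum contributes $b_{|\beta|}^\beta\,1_{n=|\beta|}$ since $b_{|k|}^k\in V^k$. For the second sum, the crucial observation is the grading property: in each of $P(w)$ and $N_k(w)$, a tensor of degree $m$ is always multiplied by exactly $w^m$. This matching of tensor degree and $w$-power is preserved by tensor product and by passing to $P(w)^{-1}$, so
$$\rho_m\left[P(w)^{-1}\otimes N_k(w)\right] = w^m\,\rho_m\left[P^{-1}\otimes N_k\right],$$
with $P:=P(1)$ and $N_k:=N_k(1)$. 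Consequently, in each term $P(w)^{-1}\otimes N_k(w)\otimes b_{|k|}^k w^{|k|}$, extracting the coefficient of $w^n$ forces the prefix $P(w)^{-1}\otimes N_k(w)$ to be selected in tensor degree $n-|k|$, which requires $|k|\leq n$ to be non-trivial, while $b_{|k|}^k$ contributes unrestrictedly in all higher degrees.

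Finally, applying the $V^\beta$ projection and noting that $b_{|k|}^k$ already lies in $V^k$, the $V^\beta$ constraint transfers entirely to the prefix as $\rho_{n-|k|}^{\beta-k}$, yielding
$$b_n^\beta = b_{|\beta|}^\beta\,1_{n=|\beta|} + \sum_{k=-n}^{n}\rho_{n-|k|}^{\beta-k}\!\left(P^{-1}\otimes N_k\right)\otimes b_{|k|}^k,$$
which is exactly (\ref{eq:bnFormula}). The main obstacle is the bookkeeping of simultaneously tracking tensor-degree projections, eigenvalue projections, and $w$-powers through the convolution; no convergence difficulty arises because, by Corollary \ref{cor:UpperLowerLimit} and (\ref{eq:iLessThann}), all relevant sums are finite in each fixed tensor degree.
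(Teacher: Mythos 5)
Your proposal is correct and follows essentially the same route as the paper: the case-by-case identity $\mathbf{1}-\mathrm{i}wv_{1}=P(w)+N_{k}(w)$ (etc.) is exactly the paper's rearrangement $(u+v)^{-1}\otimes u=\mathbf{1}-(u+v)^{-1}\otimes v$, merely applied before rather than after extracting the coefficient of $w^{n}$ and projecting onto $V^{\beta}$. Your explicit statement of the grading property (tensor degree matching the power of $w$ in $P(w)^{-1}\otimes N_{k}(w)$) is a point the paper leaves implicit, but the argument is the same.
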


\begin{proof}
From (\ref{eq:BnGenerating}), by equating the coefficients of $w^{n}$
with eigenvalue $\beta$ with $|\beta|\leq n$,
\begin{align*}
b_{n}^{\beta} & =\sum_{k=1}^{n}\rho_{n-k}^{\beta-k}\left((\mathbf{1}-\mathrm{i}(v_{1}+v_{2})-\frac{1}{2}\left(v_{1}\otimes v_{2}+v_{2}\otimes v_{1}\right))^{-1}\otimes(\mathbf{1}-\mathrm{i}v_{1})\right)\otimes b_{k}^{k}\\
 & +\sum_{k=-1}^{-n}\rho_{n-|k|}^{\beta-k}\left((\mathbf{1}-\mathrm{i}(v_{1}+v_{2})-\frac{1}{2}\left(v_{1}\otimes v_{2}+v_{2}\otimes v_{1}\right))^{-1}\otimes(\mathbf{1}-\mathrm{i}v_{2})\right)]\otimes b_{|k|}^{k}\\
 & +\rho_{n}^{\beta}\left((\mathbf{1}-\mathrm{i}(v_{1}+v_{2})-\frac{1}{2}\left(v_{1}\otimes v_{2}+v_{2}\otimes v_{1}\right))^{-1}\otimes(\mathbf{1}-\mathrm{i}v_{1}-\mathrm{i}v_{2})\right)\otimes b_{0}^{0}.
\end{align*}

Note that $(u+v)^{-1}\otimes u=\mathbf{1}-(u+v)^{-1}\otimes v$ and
so let $u=\mathbf{1}-\mathrm{i}(v_{1}+v_{2})-\frac{1}{2}\left(v_{1}\otimes v_{2}+v_{2}\otimes v_{1}\right)$,
\begin{align*}
 & b_{n}^{\beta}\\
= & \sum_{k=1}^{n}\rho_{n-k}^{\beta-k}\left(\mathbf{1}+u{}^{-1}\otimes(\mathrm{i}v_{2}+\frac{1}{2}(v_{1}\otimes v_{2}+v_{2}\otimes v_{1}))\right)\otimes b_{k}^{k}\\
 & +\sum_{k=-1}^{-n}\rho_{n-|k|}^{\beta-k}\left(\mathbf{1}+u{}^{-1}\otimes(\mathrm{i}v_{1}+\frac{1}{2}(v_{1}\otimes v_{2}+v_{2}\otimes v_{1}))\right)]\otimes b_{|k|}^{k}\\
 & +\rho_{n}^{\beta}\left(\mathbf{1}+u^{-1}\otimes\frac{1}{2}(v_{1}\otimes v_{2}+v_{2}\otimes v_{1})\right)\otimes b_{0}^{0}\\
= & b_{|\beta|}^{\beta}1_{n=|\beta|}+\sum_{k=-n}^{n}\big\{\rho_{n-|k|}^{\beta-k}\big((\mathbf{1}-\mathrm{i}(v_{1}+v_{2})-\frac{1}{2}\left(v_{1}\otimes v_{2}+v_{2}\otimes v_{1}\right))^{-1}\\
 & \otimes[\frac{1}{2}(v_{1}\otimes v_{2}+v_{2}\otimes v_{1})+\mathrm{i}v_{2}1_{\{k\geq1\}}+\mathrm{i}v_{1}1_{\{k\leq-1\}}]\big)\otimes b_{|k|}^{k}\big\}.
\end{align*}
\end{proof}
We now solve for $b_{|k|}^{k}$ using the boundary condition (\ref{eq:BoundaryODE}).
\begin{lem}
Let $u=\mathbf{1}-\mathrm{i}(v_{1}+v_{2})-\frac{1}{2}\left(v_{1}\otimes v_{2}+v_{2}\otimes v_{1}\right)$
and let $q=\frac{1}{2}\left(v_{1}\otimes v_{2}+v_{2}\otimes v_{1}\right)$.
Define a linear map $S:T((\mathbb{R}^{2}))\rightarrow T((\mathbb{R}^{2}))$
by 
\begin{align*}
S(a) & =\sum_{\beta}\sum_{k}\sum_{n\geq|\beta|\vee|k|}\frac{(-1)^{\frac{n-|\beta|}{2}}|\beta|!2^{|\beta|}}{(\frac{n-\beta}{2})!(\frac{n+\beta}{2})!2^{n}}\cdot\\
 & \rho_{n-|k|}^{\beta-k}\big(u^{-1}\otimes[q+\mathrm{i}v_{2}1_{\{k\geq1\}}+\mathrm{i}v_{1}1_{\{k\leq-1\}}]\big)\otimes\rho^{k}(a).
\end{align*}
Then 
\begin{equation}
b_{|k|}^{k}=\rho^{k}\left[\sum_{l=0}^{\infty}(-1)^{l}S^{l}(\mathbf{1})\right],\label{eq:bkkFormula}
\end{equation}
with $S^{l}$ meaning the composition of $S$ with itself $l$-times. 
\end{lem}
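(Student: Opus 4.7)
The strategy is to substitute the formula for $b_n^\beta$ from the previous lemma into the boundary condition $\phi(1)=\mathbf{1}$, recognise the resulting relation as a fixed-point equation $b+S(b)=\mathbf{1}$ in $T((\mathbb{R}^2))$, and then invert by Neumann series.

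First, I would project the boundary condition $\sum_{n\geq 0}a_n=\mathbf{1}$ onto each eigenspace $V^\beta$, using that $\mathbf{1}\in V^0$ and that $a_n^\beta=0$ for $n<|\beta|$ by Corollary \ref{cor:UpperLowerLimit}. Translating into $b$-variables through the defining relation (\ref{eq:Defineb}) and Lemma \ref{lem:SameParity} (which kills the $n-\beta$ odd terms), this yields, for each $\beta\in\mathbb{Z}$,
\[
\sum_{n\geq|\beta|}\frac{(-1)^{(n-\beta)/2}}{((n-\beta)/2)!\,((n+\beta)/2)!\,2^{n}}\,b_{n}^{\beta}\;=\;\delta_{\beta,0}\,\mathbf{1}.
\]

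Second, I would substitute the formula (\ref{eq:bnFormula}) for $b_n^\beta$ and rearrange. The $n=|\beta|$ term isolates $\frac{(-1)^{(|\beta|-\beta)/2}}{|\beta|!\,2^{|\beta|}}\,b_{|\beta|}^\beta$ (the sum over $k$ inside (\ref{eq:bnFormula}) contributes nothing at $n=|\beta|$ because $\rho_{n-|k|}^{\beta-k}$ applied to the degree-$\geq 1$ operator forces $n-|k|\geq 1$). After multiplying through by $(-1)^{(|\beta|-\beta)/2}|\beta|!\,2^{|\beta|}$, and using the identity $(-1)^{(|\beta|-\beta)/2+(n-\beta)/2}=(-1)^{(n-|\beta|)/2}$ (which I would check separately for $\beta\geq 0$ and $\beta<0$), the surviving prefactor becomes $\frac{(-1)^{(n-|\beta|)/2}\,|\beta|!\,2^{|\beta|}}{((n-\beta)/2)!\,((n+\beta)/2)!\,2^n}$, exactly matching the coefficient in the definition of $S$. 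Assembling the unknowns into $b:=\sum_k b_{|k|}^k\in T((\mathbb{R}^2))$ so that $\rho^k(b)=b_{|k|}^k$, and noting that $\rho_{n-|k|}^{\beta-k}=0$ whenever $n<|k|$ (so the bound $n\geq|\beta|\vee|k|$ in $S$ comes for free), the projected boundary equations assemble into the single identity $b+S(b)=\mathbf{1}$.

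Third, I would invert to $b=(I+S)^{-1}\mathbf{1}=\sum_{l=0}^\infty (-1)^l S^l(\mathbf{1})$ and project to obtain the claimed formula. Convergence at each tensor degree is immediate: the operator $u^{-1}\otimes[q+\mathrm{i}v_2 1_{\{k\geq 1\}}+\mathrm{i}v_1 1_{\{k\leq -1\}}]$ has no degree-$0$ component (its lowest-degree piece comes from $\mathrm{i}v_1$ or $\mathrm{i}v_2$ at degree $1$), hence each application of $S$ raises the minimum tensor degree of its argument by at least one, so $\rho_N[S^l(\mathbf{1})]=0$ whenever $l>N$ and the Neumann series reduces to a finite sum at each level.

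The main obstacle I expect is the sign and factorial bookkeeping in the second step: verifying that the asymmetric-looking prefactor coming from the $n=|\beta|$ contribution collapses to the symmetric form $(-1)^{(n-|\beta|)/2}|\beta|!\,2^{|\beta|}/\bigl(((n-\beta)/2)!((n+\beta)/2)!2^n\bigr)$ in the definition of $S$, uniformly in the sign of $\beta$, and that the swap of summation orders from $\sum_{n\geq|\beta|}\sum_{k=-n}^n$ into $\sum_k\sum_{n\geq|\beta|\vee|k|}$ is legitimate (which is fine because every tensor-degree projection turns both sums into finite sums).
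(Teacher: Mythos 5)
Your proposal follows essentially the same route as the paper: substitute the expansion of $\phi(1)$ via (\ref{eq:Defineb}) and (\ref{eq:bnFormula}) into the boundary condition, project onto each $V^{\beta}$, normalise the $n=|\beta|$ coefficient so that the relation becomes $(I+S)\big(\sum_{k}b_{|k|}^{k}\big)=\mathbf{1}$, and invert by the Neumann series. The sign identity and the reindexing of $\sum_{n\geq|\beta|}\sum_{k=-n}^{n}$ into $\sum_{k}\sum_{n\geq|\beta|\vee|k|}$ that you flag as the main obstacles are exactly the manipulations carried out in the paper's proof, and your explicit degree-raising argument for the convergence of $\sum_{l}(-1)^{l}S^{l}(\mathbf{1})$ is a detail the paper leaves implicit.
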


\begin{proof}
Note that since 
\[
\phi(r)=\sum_{\beta\in\mathbb{Z}}\sum_{n\geq|\beta|}a_{n}^{\beta}r^{n}=\sum_{\beta\in\mathbb{Z}}\sum_{n\geq|\beta|}\frac{(-1)^{\frac{n-\beta}{2}}}{(\frac{n-\beta}{2})!(\frac{n+\beta}{2})!}b_{n}^{\beta}r^{n}.
\]
Using (\ref{eq:bnFormula}), we have 
\begin{align*}
\phi(r)= & \sum_{n,\beta:n\geq|\beta|}\frac{(-1)^{\frac{n-\beta}{2}}}{(\frac{n-\beta}{2})!(\frac{n+\beta}{2})!}\frac{1}{2^{n}}b_{n}^{\beta}r^{n}\\
= & \sum_{\beta}\frac{1}{|\beta|!}\frac{(-1)^{\frac{|\beta|-\beta}{2}}}{2^{|\beta|}}b_{|\beta|}^{\beta}r^{|\beta|}\\
 & +\sum_{\beta}\sum_{n\geq|\beta|}\frac{(-1)^{\frac{n-\beta}{2}}}{(\frac{n-\beta}{2})!(\frac{n+\beta}{2})!2^{n}}\sum_{k=-n}^{n}\rho_{n-|k|}^{\beta-k}(u^{-1}\otimes[q+\mathrm{i}v_{2}1_{\{k\geq1\}}+\mathrm{i}v_{1}1_{\{k\leq-1\}}])\otimes b_{|k|}^{k}r^{n}.
\end{align*}
Using the boundary condition $\phi(1)=\mathbf{1}$ and projecting
onto $V^{\beta}$, 
\begin{align*}
1_{\beta=0}= & \frac{1}{|\beta|!}\frac{(-1)^{\frac{|\beta|-\beta}{2}}}{2^{|\beta|}}b_{|\beta|}^{\beta}\\
 & +\sum_{n\geq|\beta|}\frac{(-1)^{\frac{n-\beta}{2}}}{(\frac{n-\beta}{2})!(\frac{n+\beta}{2})!2^{n}}\sum_{k=-n}^{n}\rho_{n-|k|}^{\beta-k}\big(u{}^{-1}\otimes[q+\mathrm{i}v_{2}1_{\{k\geq1\}}+\mathrm{i}v_{1}1_{\{k\leq-1\}}]\big)\otimes b_{|k|}^{k}\\
= & \frac{1}{|\beta|!}\frac{(-1)^{\frac{|\beta|-\beta}{2}}}{2^{|\beta|}}b_{|\beta|}^{\beta}\\
 & +\sum_{k}\big[\sum_{n\geq|\beta|\vee|k|}\frac{(-1)^{\frac{n-\beta}{2}}}{(\frac{n-\beta}{2})!(\frac{n+\beta}{2})!2^{n}}\rho_{n-|k|}^{\beta-k}\big(u{}^{-1}\otimes[q+\mathrm{i}v_{2}1_{\{k\geq1\}}+\mathrm{i}v_{1}1_{\{k\leq-1\}}]\big)\big]\otimes b_{|k|}^{k}.
\end{align*}
Therefore 
\begin{align}
1_{\beta=0} & =b_{|\beta|}^{\beta}+\sum_{k}\big[\sum_{n\geq|\beta|\vee|k|}\frac{(-1)^{\frac{n-|\beta|}{2}}|\beta|!2^{|\beta|}}{(\frac{n-\beta}{2})!(\frac{n+\beta}{2})!2^{n}}\rho_{n-|k|}^{\beta-k}\big(u{}^{-1}\otimes[q+\mathrm{i}v_{2}1_{\{k\geq1\}}+\mathrm{i}v_{1}1_{\{k\leq-1\}}]\big)\big]\otimes b_{|k|}^{k}.\label{eq:BoundaryEquation}
\end{align}
Summing (\ref{eq:BoundaryEquation}) over $\beta$, we have 
\[
\mathbf{1}=(I+S)(\sum_{\beta}b_{|\beta|}^{\beta}),
\]
and therefore 
\[
\sum_{k\in\mathbb{Z}}b_{|k|}^{k}=\sum_{l=0}^{\infty}(-1)^{l}S^{l}(\mathbf{1}).
\]
\end{proof}
\begin{thm}
\label{thm:MainTheorem}Let $\Phi_{\mathbb{D}}$ be the expected signature
of $B^{z}$ up to the first exit time of the planar disc $\mathbb{D}$.
If $z=(r\cos\theta,r\sin\theta)^{T}$, then 
\[
\Phi_{\mathbb{D}}(z)=\mathbf{R}(\theta)\big[\sum_{\beta\in\mathbb{Z}}\sum_{n\geq|\beta|}\frac{(-1)^{\frac{n-\beta}{2}}}{(\frac{n-\beta}{2})!(\frac{n+\beta}{2})!}b_{n}^{\beta}r^{n}\big],
\]
with $b_{n}^{\beta}$ being given by (\ref{eq:bnFormula}) and (\ref{eq:bkkFormula}). 
\end{thm}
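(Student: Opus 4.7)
The plan is to assemble the main theorem as a straightforward corollary of the chain of lemmas already established. First I would invoke the Separation of Variables corollary to write $\Phi_{\mathbb{D}}(z) = \mathbf{R}(\theta)[\phi(r)]$ where $\phi(r) = \Phi_{\mathbb{D}}((r,0)^{T})$, which reduces the task to producing a closed-form expression for $\phi(r)$. Next, applying Lemma \ref{lem:TensorCoefficient} (which is justified via Theorem 3.5 of \cite{LyonsNi} guaranteeing polynomial dependence on $z$), I would expand $\phi(r) = \sum_{n \geq 0} a_{n} r^{n}$, with the sum being componentwise finite on each $(\mathbb{R}^{2})^{\otimes i}$ by the vanishing property $\rho_{i}[a_{n}] = 0$ for $n > i$.

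Then I would invoke the eigenspace decomposition $a_{n} = \sum_{\beta \in \mathbb{Z}} a_{n}^{\beta}$ with $a_{n}^{\beta} \in V^{\beta}$ coming from the spectral analysis of $f$. Corollary \ref{cor:UpperLowerLimit} reduces the range of $\beta$ to $|\beta| \leq n$, so I can write
\[
\phi(r) = \sum_{\beta \in \mathbb{Z}} \sum_{n \geq |\beta|} a_{n}^{\beta} r^{n}.
\]
At this point I would substitute the change of variables from Lemma 3.7, namely $a_{n}^{\beta} = \frac{(-1)^{(n-\beta)/2}}{((n-\beta)/2)!((n+\beta)/2)!\,2^{n}} b_{n}^{\beta}$, to obtain the displayed formula in the theorem. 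This change of variables is valid precisely when $n \geq |\beta|$ and $n - \beta$ is even (both conditions guaranteed by Corollary \ref{cor:UpperLowerLimit} and Lemma \ref{lem:SameParity}), so terms for which $n - \beta$ is odd contribute zero on both sides automatically.

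Finally, I would observe that $b_{n}^{\beta}$ is expressed by formula (\ref{eq:bnFormula}) in terms of the boundary data $\{b_{|k|}^{k}\}_{k \in \mathbb{Z}}$, and these boundary data are themselves determined by (\ref{eq:bkkFormula}) via the Neumann-type series $\sum_{l \geq 0} (-1)^{l} S^{l}(\mathbf{1})$ coming from inverting $I + S$. Combining these identifications completes the description of $\phi(r)$, and rotating by $\mathbf{R}(\theta)$ yields the stated representation for $\Phi_{\mathbb{D}}(z)$.

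The step that requires the most care is the bookkeeping in the last assembly: one must verify that the Neumann series defining $\sum_{k} b_{|k|}^{k}$ converges componentwise on each tensor level $(\mathbb{R}^{2})^{\otimes i}$, which follows from the fact that $S$ strictly increases tensor degree (via the $\rho_{n-|k|}^{\beta-k}$ projection combined with tensoring by elements of positive degree), so on any fixed level only finitely many iterates $S^{l}(\mathbf{1})$ contribute. Apart from this finiteness check and the careful tracking of the indices $\beta$, $k$, $n$ across the recursion, the proof is a direct concatenation of the preceding lemmas with no further analytical content.
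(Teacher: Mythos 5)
Your proposal is correct and matches the paper's approach exactly: the paper gives no separate proof of Theorem \ref{thm:MainTheorem}, treating it as the direct concatenation of the Separation of Variables corollary, Lemma \ref{lem:TensorCoefficient}, the eigenspace decomposition with Corollary \ref{cor:UpperLowerLimit} and Lemma \ref{lem:SameParity}, the substitution (\ref{eq:Defineb}), and formulas (\ref{eq:bnFormula}) and (\ref{eq:bkkFormula}), which is precisely your assembly. The only point worth noting is that carrying the factor $2^{-n}$ from (\ref{eq:Defineb}) through, as you do, actually exposes a discrepancy with the theorem's displayed formula (which omits $2^{-n}$), an inconsistency already present in the paper's own text rather than a flaw in your argument.
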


Although the formula (\ref{eq:bnFormula}) is quite complicated, it
reduces to a simpler formula if we are only looking for the coefficient
of $r^{n}$ in $\rho_{n}(\Phi_{\mathbb{D}}(z))$. 
\begin{cor}
\label{cor:MainCorollary}Let $u=\mathbf{1}-\mathrm{i}(v_{1}+v_{2})-\frac{1}{2}\left(v_{1}\otimes v_{2}+v_{2}\otimes v_{1}\right)$
and let $q=\frac{1}{2}\left(v_{1}\otimes v_{2}+v_{2}\otimes v_{1}\right)$.
We have that
\begin{equation}
\rho_{n}[a_{n}]=\begin{cases}
1, & \text{if }n=0\\
0, & \text{if }n=1\\
\frac{1}{2^{n}}\sum_{m=1}^{n-1}\frac{(-1)^{m}}{m!(n-m)!}\rho_{n-2}^{n-2m}[u^{-1}]\otimes q & \text{if }n\geq2.
\end{cases}\label{eq:TopDegree}
\end{equation}
\end{cor}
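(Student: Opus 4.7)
The plan is to project (\ref{eq:bnFormula}) onto total degree $n$ and feed the result into $\rho_n[a_n] = \sum_\beta \rho_n[a_n^\beta]$ via the conversion (\ref{eq:Defineb}). The claim should then fall out once we know which of the ``seed'' tensors $c_k := \rho_{|k|}[b_{|k|}^k]$ are nonzero. A preliminary identity worth recording, since it is what makes the tensor $q$ appear on the right-hand side, is $e_1\otimes e_1 + e_2\otimes e_2 = q$, a one-line consequence of (\ref{eq:e1e2v1v2}).

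The core step will be a vanishing lemma: $c_0 = 1$ and $c_k = 0$ for every $k\neq 0$. The first value is immediate from $\rho_0[\phi(r)] = 1$ together with $a_0 = a_0^0 = b_0^0$. For $k \geq 1$, I would apply Lemma \ref{lem:SignatureLinearMap} (extended by complexification) to the dual covector $v_1^*$: one checks $v_1^*(B) = -\tfrac{\mathrm{i}}{2}Z$ where $Z = B^1 + \mathrm{i}B^2$ has zero quadratic variation, so the iterated Stratonovich integrals collapse to $(Z_\tau - Z_0)^k/k!$. Harmonicity of $z \mapsto z^j$ gives $\mathbb{E}[Z_\tau^j \mid Z_0 = r] = r^j$, so $\mathbb{E}[(Z_\tau - r)^k] = 0$ for $k \geq 1$ by the binomial theorem. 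This forces the $v_1^{\otimes k}$-coefficient of $\rho_k[\phi(r)]$ to vanish identically in $r$; its $r^k$-coefficient, which is exactly $\rho_k[a_k^k] = \frac{1}{k!\,2^k}c_k$, is therefore $0$. A symmetric argument using $v_2^*$ (or $\bar Z$) gives $c_k = 0$ for $k \leq -1$.

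Granted these vanishings, (\ref{eq:bnFormula}) collapses for $n \geq 2$ to $\rho_n[b_n^\beta] = \rho_{n-2}^\beta[u^{-1}]\otimes q$, because the boundary term $c_\beta 1_{n=|\beta|}$ survives only at $\beta = \pm n$ (where $c_{\pm n}=0$), and within the $k$-sum only $k=0$ contributes (with $c_0 = 1$). Substituting into $\rho_n[a_n]$, imposing the parity constraint from Lemma \ref{lem:SameParity}, and reindexing $\beta = n-2m$ yields
\[
 \rho_n[a_n] = \frac{1}{2^n}\sum_{m=0}^n \frac{(-1)^m}{m!\,(n-m)!}\,\rho_{n-2}^{n-2m}[u^{-1}]\otimes q.
\]
The endpoints $m=0$ and $m=n$ drop because $V_{n-2}^{\pm n} = \{0\}$: any tensor in $V_{n-2}$ built from $v_1, v_2$ has eigenvalue magnitude at most $n-2 < n$. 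This restricts the sum to $m \in \{1, \ldots, n-1\}$, which is the claim. The base cases $n = 0$ and $n = 1$ come directly from $\rho_0[\phi]=1$ and $\rho_1[\phi]=0$.

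The main obstacle is the vanishing lemma: the identities $c_k = 0$ for $k \neq 0$ are not visible from the algebraic recurrences themselves, and are really the harmonic-measure identity $\mathbb{E}[Z_\tau^k \mid Z_0] = Z_0^k$ in disguise. Once this probabilistic input is in hand, the rest is eigenspace bookkeeping.
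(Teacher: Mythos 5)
Your proposal is correct and, apart from one step, follows the same route as the paper: project (\ref{eq:bnFormula}) onto tensor degree $n$, observe that only the degree-$|k|$ part of $b_{|k|}^{k}$ can survive the projection, reduce the $k$-sum to the single term $k=0$, convert back through (\ref{eq:Defineb}), and kill the endpoints $m=0$ and $m=n$ because $V_{n-2}^{\pm n}=\{0\}$. The one place you genuinely diverge is the key vanishing $\rho_{|k|}[b_{|k|}^{k}]=\mathbf{1}\cdot 1_{k=0}$. The paper obtains this purely algebraically from the already-established formula (\ref{eq:bkkFormula}): $S(\mathbf{1})$, and hence $S^{l}(\mathbf{1})$ for every $l\geq1$, has the form $\alpha\otimes q$, and since $q$ has degree $2$ and eigenvalue $0$, the degree-$j$ component of such a tensor has eigenvalue of magnitude at most $j-2$, so $\rho_{|k|}^{k}[S^{l}(\mathbf{1})]=0$ for all $l\geq1$ and only the $l=0$ term $\mathbf{1}$ survives. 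Your closing remark that the vanishing ``is not visible from the algebraic recurrences themselves'' is accurate for the recurrence (\ref{eq:anbeta}) alone, but not for (\ref{eq:bkkFormula}), which already encodes the boundary condition $\phi(1)=\mathbf{1}$ and hands the paper the vanishing for free. Your probabilistic substitute --- $v_{1}^{*}(B)=-\tfrac{\mathrm{i}}{2}Z$ with $Z=B^{1}+\mathrm{i}B^{2}$, collapse of the one-dimensional iterated Stratonovich integrals to $(Z_{\tau}-Z_{0})^{k}/k!$, and $\mathbb{E}[Z_{\tau}^{j}]=Z_{0}^{j}$ by optional stopping for the bounded local martingale $Z^{j}$ --- is valid and in fact proves more: the entire $v_{1}^{\otimes k}$-coefficient of $\rho_{k}[\phi(r)]$ vanishes identically in $r$, not only its top coefficient, and the vanishing is explained conceptually as the mean-value property of the harmonic polynomials $z^{k}$. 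What it costs is importing probabilistic input (a complexified version of Lemma \ref{lem:SignatureLinearMap} and an optional-stopping argument) at a point where the paper can stay entirely inside the algebra it has already set up.
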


\begin{proof}
We need to first calculate
\begin{align}
\rho_{n}[b_{n}^{\beta}] & =\rho_{n}\left[b_{|\beta|}^{\beta}1_{n=|\beta|}+\sum_{k=-n}^{n}\rho_{n-|k|}^{\beta-k}(u^{-1}\otimes[q+\mathrm{i}v_{2}1_{\{k\geq1\}}+\mathrm{i}v_{1}1_{\{k\leq-1\}}])\otimes b_{|k|}^{k}\right]\nonumber \\
 & =\rho_{|\beta|}[b_{|\beta|}^{\beta}]1_{n=|\beta|}+\sum_{k=-n}^{n}\rho_{n-|k|}^{\beta-k}(u^{-1}\otimes[q+\mathrm{i}v_{2}1_{\{k\geq1\}}+\mathrm{i}v_{1}1_{\{k\leq-1\}}])\otimes\rho_{|k|}[b_{|k|}^{k}].\label{eq:bnLowestDegree}
\end{align}
Note that $\rho_{|k|}[b_{|k|}^{k}]$ has tensor degree $|k|$ and
eigenvalue $k$, this means for some $c_{k}\in\mathbb{R}$
\[
b_{|k|}^{k}=\begin{cases}
c_{k}v_{1}^{\otimes k}, & \text{if }k>0\\
c_{k}v_{2}^{\otimes k}, & \text{if }k<0\\
c_{k}, & \text{if }k=0.
\end{cases}
\]
Now because $S(\mathbf{1})$ is of the form $\alpha\otimes\frac{1}{2}(v_{1}\otimes v_{2}+v_{2}\otimes v_{1})$,
so is $S^{l}(\mathbf{1})$ for all $l\geq1$ and therefore 
\[
\rho_{|k|}^{k}[S^{l}(\mathbf{1})]=0\qquad\forall k\in\mathbb{Z},l\geq1.
\]
That means 
\[
\rho_{|k|}[b_{|k|}^{k}]=\begin{cases}
\mathbf{1}, & \text{if }k=0\\
0, & \text{if }k\neq0.
\end{cases}
\]
Substituting into (\ref{eq:bnLowestDegree}) gives 
\[
\rho_{n}[b_{n}^{\beta}]=1_{n=\beta=0}+\rho_{n}^{\beta}\big(u{}^{-1}\otimes\frac{1}{2}(v_{1}\otimes v_{2}+v_{2}\otimes v_{1})\big).
\]
In particular, 
\[
\rho_{n}[a_{n}]=1_{n=0}+\sum_{\beta=-n}^{n}\frac{(-1)^{\frac{n-\beta}{2}}}{(\frac{n-\beta}{2})!(\frac{n+\beta}{2})!2^{n}}\rho_{n}^{\beta}\big(u{}^{-1}\otimes\frac{1}{2}(v_{1}\otimes v_{2}+v_{2}\otimes v_{1})\big).
\]
Note that for any $a\in T((\mathbb{R}^{2}))$, $\rho_{n}^{\beta}(a)=0$
whenever $n-\beta$ is odd. Therefore, we may substitute $m=\frac{n-\beta}{2}$
to obtain
\begin{align*}
\rho_{n}[a_{n}] & =1_{n=0}+\sum_{m=0}^{n}\frac{(-1)^{m}}{m!(n-m)!2^{n}}\rho_{n}^{n-2m}\big(u{}^{-1}\otimes\frac{1}{2}(v_{1}\otimes v_{2}+v_{2}\otimes v_{1})\big)\\
 & =1_{n=0}+\sum_{m=0}^{n}\frac{(-1)^{m}}{m!(n-m)!2^{n}}\rho_{n-2}^{n-2m}\big(u{}^{-1}\big)\otimes\frac{1}{2}(v_{1}\otimes v_{2}+v_{2}\otimes v_{1})
\end{align*}
Note that for any $a\in T((\mathbb{R}^{2}))$, $\rho_{n}^{\beta}(a)=0$
for $|\beta|>n$, therefore the terms corresponding to $m=0$ ad $m=n$
vanishes and we obtain (\ref{eq:TopDegree}).
\end{proof}
\begin{example}
We now describe how to use Corollary \ref{cor:MainCorollary} to find
the terms in $\rho_{n}[\Phi_{\mathbb{D}}(z)]$ with polynomial degree
$4$ in $z$. By Corollary \ref{cor:MainCorollary}, this is given
by
\begin{align*}
E_{4,4} & =\frac{r^{4}}{2^{4}}\mathbf{R}(\theta)\left\{ \sum_{m=1}^{3}\frac{(-1)^{m}}{m!(4-m)!}\rho_{2}^{4-2m}[\left(\mathbf{1}-\mathrm{i}(v_{1}+v_{2})-\frac{1}{2}\left(v_{1}\otimes v_{2}+v_{2}\otimes v_{1}\right)\right)^{-1}]\right.\\
 & \left.\otimes\frac{1}{2}\left(v_{1}\otimes v_{2}+v_{2}\otimes v_{1}\right)\right\} .
\end{align*}
Note that 
\begin{align*}
 & \rho_{2}^{4-2m}[\left(\mathbf{1}-\mathrm{i}(v_{1}+v_{2})-\frac{1}{2}\left(v_{1}\otimes v_{2}+v_{2}\otimes v_{1}\right)\right)^{-1}]\\
= & \rho_{2}^{4-2m}[\sum_{l=0}^{\infty}\left[\mathrm{i}(v_{1}+v_{2})+\frac{1}{2}\left(v_{1}\otimes v_{2}+v_{2}\otimes v_{1}\right)\right]^{\otimes l}]\\
= & \rho^{4-2m}[\left[\mathrm{i}(v_{1}+v_{2})\right]^{\otimes2}+\frac{1}{2}\left(v_{1}\otimes v_{2}+v_{2}\otimes v_{1}\right)]\quad\text{(projecting to degree 2 terms)}\\
= & \begin{cases}
(\mathrm{i}v_{1})^{\otimes2}, & m=1\\
-\frac{1}{2}\left(v_{1}\otimes v_{2}+v_{2}\otimes v_{1}\right) & m=2\\
(\mathrm{i}v_{2})^{\otimes2}, & m=3.
\end{cases}
\end{align*}
For the final line above, we note that $\rho^{k}$ is a projection
onto terms where the number of $v_{1}$ minus the number of $v_{2}$
is equal to $k$. 

Substituting $v_{1}=\mathrm{i}e_{1}+e_{2}$, $v_{2}=-\mathrm{i}e_{1}+e_{2}$
and $\frac{1}{2}(v_{1}\otimes v_{2}+v_{2}\otimes v_{1})=e_{1}^{\otimes2}+e_{2}^{\otimes2}$,
\begin{align*}
E_{4,4} & =\frac{r^{4}}{2^{4}}\mathbf{R}(\theta)\left[\big(-\frac{1}{3!}(-e_{1}+\mathrm{i}e_{2})^{\otimes2}-\frac{1}{2!2!}(e_{1}^{\otimes2}+e_{2}^{\otimes2})-\frac{1}{3!}(e_{1}+\mathrm{i}e_{2})^{\otimes2}\big)\otimes(e_{1}^{\otimes2}+e_{2}^{\otimes2})\right]\\
 & =\frac{r^{4}}{2^{4}}\mathbf{R}(\theta)\left[-\frac{7}{12}e_{1}^{\otimes2}+\frac{1}{12}e_{2}^{\otimes2}\right]\otimes(e_{1}^{\otimes2}+e_{2}^{\otimes2}).
\end{align*}
Note that 
\begin{align*}
\mathbf{R}(\theta)[e_{1}] & =\cos\theta e_{1}+\sin\theta e_{2},\mathbf{R}(\theta)[e_{2}]=-\sin\theta e_{1}+\cos\theta e_{2}\\
 & \mathbf{R}(\theta)[e_{1}^{\otimes2}+e_{2}^{\otimes2}]=e_{1}^{\otimes2}+e_{2}^{\otimes2}.
\end{align*}
Therefore putting in $z_{1}=r\cos\theta$ and $z_{2}=r\sin\theta$,
\begin{align*}
E_{4,4} & =\frac{1}{2^{4}}\left[-\frac{7}{12}(z_{1}e_{1}+z_{2}e_{2})^{\otimes2}+\frac{1}{12}(-z_{2}e_{1}+z_{1}e_{2})^{\otimes2}\right]\otimes[(z_{1}e_{1}+z_{2}e_{2})^{\otimes2}+(-z_{2}e_{1}+z_{1}e_{2})^{\otimes2}]\\
 & =\frac{(z_{1}^{2}+z_{2}^{2})}{192}\left[-7(z_{1}e_{1}+z_{2}e_{2})^{\otimes2}+(-z_{2}e_{1}+z_{1}e_{2})^{\otimes2}\right]\otimes[e_{1}^{\otimes2}+e_{2}^{\otimes2}],
\end{align*}
which is consistent with that calculated as part of Remark 3.5 in
\cite{LyonsNi}.
\end{example}

\end{document}